\documentclass[reqno,a4paper,12pt,russian,english]{amsart}
\usepackage{amsmath,amsthm,amsfonts,amssymb}
\usepackage{verbatim, graphicx, color, ifthen, enumitem}
\usepackage[T1]{fontenc}
\usepackage [applemac,koi8-r,latin9] {inputenc}
\usepackage{upquote,bbm}
\usepackage{babel}
\definecolor{myblue}{rgb}{0.09,0.32,0.44} 
\usepackage[pdfborder={0 0 0},pdfborderstyle={},colorlinks=true,linkcolor=myblue,citecolor=myblue,urlcolor=blue]{hyperref}

\addtolength{\hoffset}{-1cm} \addtolength{\textwidth}{2cm}
\addtolength{\textheight}{-0.5cm}
\parskip .04in

\allowdisplaybreaks

\setlength{\marginparwidth}{1in}
\let\oldmarginpar\marginpar
\renewcommand\marginpar[1]{\-\oldmarginpar[\raggedleft\footnotesize #1]%
{\raggedright\footnotesize #1}}
%
%

\theoremstyle{plain}

\newtheorem*{thm*}{Theorem}

\def\clap#1{\hbox to 0pt{\hss#1\hss}}

\usepackage{calrsfs}
\usepackage{graphicx}
\usepackage{color}
\usepackage{perpage}

\gdef\SetFigFontNFSS#1#2#3#4#5{} 

\theoremstyle{remark}
\newtheorem*{qst*}{Question}

\newtheorem*{claim*}{Claim}
\newtheorem{theorem}{Theorem}
\newtheorem{lemma}{Lemma}

\newtheorem{corollary}{Corollary}
\theoremstyle{definition}
\newtheorem{definition}{Definition}

\theoremstyle{remark}

\newtheorem*{remark*}{Remark}

\newcommand{\Z}{\mathbb{Z}}

\newcommand{\R}{\mathbb{R}}

\newcommand{\CC}{\mathbb{C}}

\def\L{\Lambda}
\def\l{\lambda}

\def\Z{\mathbb{Z}}

\def\R{\mathbb{R}}

\def\1{\mathbbm{1}}

\def\d{\delta}


\begin{document}

\title[Combining Riesz bases in $\R^d$]{Combining Riesz bases in $\R^d$}
\author{Gady Kozma and Shahaf Nitzan}

\begin{abstract}

We prove that every finite union of rectangles in $\R^d$ admits a Riesz basis of exponentials.
\end{abstract}
\maketitle
\section{introduction}

Orthogonal bases are used throughout mathematics and its applications. However, in many settings such
bases are not easy to come by. For example, even the union of as few as two
disjoint intervals in $\R$ may not admit an orthogonal basis of exponentials,
$e(\L):=\{e^{i\langle\lambda,t\rangle}\}_{\lambda\in\Lambda}$. This example should be contrasted with the case of a single interval, where the exponential orthogonal basis plays a fundamental role.

Among the systems which may be considered as replacements for orthogonal bases, Riesz bases are the best possible: They are the image of orthogonal bases under a bounded invertible operator and therefore \textit{preserve most of their qualities}. In particular, if $e(\L)$ is a Riesz basis over some set $S\subset\R$ then every $f\in L^2(S)$ can be decomposed into a series $f=\sum a_{\l}e^{2\pi i \l t}$ in a unique and stable way. 

Our understanding of the existence of Riesz bases
of exponentials is still lacking. On the one hand, there are relatively
few examples in which it is known how to construct a Riesz basis of
exponentials. For example, in two dimensions, we do not know how to
construct such a basis for either a ball or a triangle, nor even have
a reasonable candidate to be such a basis (it is known that neither
set supports an orthogonal basis of exponentials, \cite{F74, IKT03}).
Some constructions of Riesz bases (e.g.\ for polytopes with some arithmetic
constraints) can be found in \cite{GL14} and references within.
On the other hand, we know of no example of a set $S$ of positive
measure for which a Riesz basis of exponentials can be shown not to
exist.

In \cite{KozmaNitzan} we proved the following.
\begin{theorem}\label{result for intervals:p1}
Let $S\subset\mathbb{R}$ be a finite union of intervals. Then there
exists a set $\Lambda\subset\mathbb{R}$ such that the family
$e(\Lambda):=\{e^{2\pi i\lambda t}\}_{\lambda\in\Lambda}$
is a Riesz basis in $L^{2}(S)$. Moreover, if $S\subset [0,1]$ then $\L$ may be chosen to satisfy $\L\subset \Z$.
\end{theorem}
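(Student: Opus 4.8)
The plan is to prove the stronger \emph{moreover} assertion; by rescaling any bounded $S$ into $[0,1)$ (which only rescales the frequency set) the first assertion then follows, so I work with $S\subset[0,1)$ and seek $\Lambda\subset\Z$. A useful preliminary observation is that the upper (Bessel) bound is automatic: for every $f\in L^2(S)$, extending $f$ by zero to $[0,1)$ and applying Parseval for the orthonormal system $\{e^{2\pi i n t}\}_{n\in\Z}$ gives $\sum_{\lambda\in\Lambda}|\langle f,e^{2\pi i\lambda t}\rangle_{L^2(S)}|^2\le\sum_{n\in\Z}|\langle f,e^{2\pi i n t}\rangle|^2=\|f\|_{L^2(S)}^2$ for \emph{any} $\Lambda\subset\Z$. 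Thus only a lower frame bound together with completeness (i.e.\ exactness) has to be produced, and --- crucially --- the density of $\Lambda$ is then forced to equal $|S|$.

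The exact core of the argument handles \emph{commensurable} data. Suppose $S=\bigcup_{k\in A}[k/N,(k+1)/N)$ for some $N$ and some $A\subset\{0,\dots,N-1\}$ with $|A|=m$. I would pass to the vector-valued picture via the unitary $U\colon L^2(S)\to L^2\big([0,1/N);\mathbb{C}^{m}\big)$, $(Uf)(s)=(f(s+k/N))_{k\in A}$, and look for $\Lambda$ of the shape $\Lambda=\bigcup_{i=1}^{m}(r_i+N\Z)$ with residues $r_1,\dots,r_m\in\{0,\dots,N-1\}$ to be chosen. A direct computation, using Parseval in the $\Z$-variable for each fixed residue, shows that the frame sum of $e(\Lambda)$ equals $\int_0^{1/N}\|V^{*}(Uf)(s)\|^2\,\dif s$, where $V$ is the $m\times m$ matrix with entries $e^{2\pi i r_i k/N}$ ($k\in A$, $i=1,\dots,m$). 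Hence $e(\Lambda)$ is a Riesz basis of $L^2(S)$ precisely when $V$ is invertible, the Riesz bounds being the extreme singular values of $V$. Invertibility is always attainable: the rows indexed by $A$ of the $N\times N$ Fourier matrix have rank $m$, so $m$ of its columns restricted to those rows are linearly independent, and one takes the corresponding column indices as $r_1,\dots,r_m$. This already proves the theorem for every finite union of intervals with commensurable (e.g.\ rational) endpoints, with $\Lambda\subset\Z$.

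For a general finite union of intervals the obstruction is exactly the density constraint noted above: a grid set has rational measure, whereas $|S|$ may be irrational, so no single grid construction can match $|S|$. The strategy I would follow is to split $S=S_0\sqcup R$, where $S_0$ is a grid set treated by the core construction and $R$ is a remainder consisting of a few short intervals, and then to \emph{combine} a Riesz basis $e(\Lambda_0)$ of $L^2(S_0)$ with a Riesz basis $e(\Lambda_1)$ of $L^2(R)$ into a Riesz basis $e(\Lambda_0\cup\Lambda_1)$ of $L^2(S)$. Matching the densities is then automatic, since $\operatorname{dens}\Lambda_0+\operatorname{dens}\Lambda_1=|S_0|+|R|=|S|$, and the base case, after further splitting and scaling, reduces to single intervals of arbitrary (possibly irrational) length, for which an integer-frequency Riesz basis is still needed.

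I expect the \textbf{main obstacle} to be precisely this combining step. The difficulty is that the exponentials $e(\Lambda_0)$ do not vanish on $R$ and $e(\Lambda_1)$ do not vanish on $S_0$, so the two systems interfere on the union and neither the lower bound nor completeness is inherited for free; moreover the interaction cannot be made negligible merely by separating $\Lambda_0$ and $\Lambda_1$ in frequency, since multiplication by a high modulation $e^{2\pi i M t}$ is unitary on $L^2(S)$ and does not decouple the pieces. A successful combining lemma must therefore exploit genuine structure --- e.g.\ arranging $\Lambda_1$ so that it is (bi)orthogonal to the $e(\Lambda_0)$-expansion across the interface, or controlling the off-diagonal block of the combined Gram operator and inverting it by a perturbation/Neumann argument --- and establishing such a lemma (which also supplies the irrational-length single-interval base case) is where essentially all of the work lies.
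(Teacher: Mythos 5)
Your commensurable case is fine and is, in substance, the mechanism behind the paper's folding lemma (Lemma \ref{lem:main:p2} in the one--dimensional case): your matrix $V$ is an $m\times m$ submatrix of the $N\times N$ Fourier matrix, and its invertibility is exactly what drives that lemma. But the proposal stops at the point where the actual theorem begins: you explicitly leave open the step that combines a grid piece $S_0$ with a remainder $R$, and that step, as you set it up, is not the right one. Splitting $S=S_0\sqcup R$ into \emph{disjoint} pieces and taking a union of a Riesz basis for each is precisely the ``most natural candidate'' that the introduction of this paper points out does not work; no perturbation or Gram--matrix argument rescues it in general, and this is why \cite{KozmaNitzan} had to invent a different combination scheme.

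The missing idea is to replace the disjoint decomposition by the folding decomposition: for a suitable $N$ one passes from $S$ to the \emph{nested} family $S_{\ge 1}\supset S_{\ge 2}\supset\dotsb\supset S_{\ge N}$ of subsets of $[0,1/N]$ (where $S_{\ge n}$ collects the points covered at least $n$ times by the translates $S-j/N$), proves that Riesz bases $\Lambda_n\subset N\Z$ for the $S_{\ge n}$ assemble into a Riesz basis $\bigcup_n(\Lambda_n+n)$ for $S$ (this needs the ``universal'' Vandermonde property of the first $n$ rows of the Fourier matrix, not merely the existence of some invertible $m\times m$ submatrix as in your argument), and then observes (Lemma \ref{ichsa}) that for infinitely many $N$ each $S_{\ge n}$ is a union of strictly fewer cyclic intervals than $S$. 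This sets up an induction on the number of intervals, whose base case is a single interval of arbitrary, possibly irrational, length with integer frequencies --- itself a nontrivial known theorem of Seip, which your proposal also needs but neither proves nor cites. As written, the proposal proves only the rational--endpoint case and correctly identifies, but does not close, the gap where all the work lies.
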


In this paper we extend this result to higher dimensions in the following way.
\begin{theorem}\label{mainresult:p2}
Let $S\subset\mathbb{R}^d$ be a finite union of rectangles with edges parallel to the axes. Then there
exists a set $\Sigma\subset\mathbb{R}^d$ such that the family $e(\Sigma)$
is a Riesz basis in $L^{2}(S)$. Moreover, if $S\subset [0,1]^d$ then $\Sigma$ may be chosen to satisfy $\Sigma\subset \Z^d$.
\end{theorem}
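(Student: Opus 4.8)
The plan is to induct on the dimension $d$. The base case $d=1$ is exactly Theorem~\ref{result for intervals:p1}, so assume the statement in dimension $d-1$ and let $S\subseteq\R^d$ be a finite union of axis-parallel rectangles. Writing a point as $(x,y)\in\R^{d-1}\times\R$, I would first slice $S$ along the last coordinate: extending all hyperplanes that carry a face of $S$ orthogonal to $e_d$ produces cut levels $c_0<c_1<\dots<c_m$, and on each slab the cross-section is constant, so that up to a null set
\[ S=\bigsqcup_{k=1}^{m} T_k\times J_k,\qquad J_k=(c_{k-1},c_k), \]
where each $T_k\subseteq\R^{d-1}$ is itself a finite union of axis-parallel rectangles. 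By the inductive hypothesis each $T_k$ carries a Riesz basis of exponentials; in fact, applying the hypothesis to the single set $T:=\bigcup_k T_k$ gives one frequency set $\L\subseteq\R^{d-1}$ with $e(\L)$ a Riesz basis of $L^2(T)$, and hence (by restriction, extending functions by zero) a frame of $L^2(T_k)$ for every $k$ with frame bounds controlled by the Riesz bounds of $e(\L)$. This uniformization---one $x$-frequency set serving all slabs at once---is what lets the different cross-sections be treated together.

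The heart of the argument is then a one-variable \emph{combining lemma}: given the slab decomposition above together with the simultaneous frames $e(\L)|_{T_k}$, one must manufacture a single set $\Sigma\subseteq\R^{d-1}\times\R$ so that $e(\Sigma)$ is a Riesz basis of $L^2(S)$. The difficulty, and the reason one cannot simply take a union of per-slab bases, is that a frequency $\sigma=(\xi,\eta)$ yields \emph{one} global exponential $e^{2\pi i(\langle\xi,x\rangle+\eta y)}$ that acts on every slab simultaneously; its restriction to the $k$-th slab factors as $e^{2\pi i\langle\xi,x\rangle}\cdot e^{2\pi i\eta y}$, but $\xi$ need not be adapted to $T_k$. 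I would build $\Sigma$ by pairing each $\xi\in\L$ with $y$-frequencies drawn from a shifted lattice $\tfrac1h\Z$ refined enough that the exponentials $e^{2\pi i\eta y}$ form a \emph{tight frame} over each $J_k$ simultaneously; the built-in redundancy in $y$ is then exactly what is needed to absorb the varying redundancy of the frames $e(\L)|_{T_k}$ in $x$. Concretely, after translating each slab back to a common position one passes to the Gram operator of the candidate system $e(\Sigma)$; because the translations enter only through unimodular phases $e^{2\pi i\langle\sigma,\tau_k\rangle}$, this Gram operator is a finite sum of Laurent/Toeplitz-type blocks whose symbol can be computed explicitly, and the task reduces to showing the symbol is bounded above and bounded away from zero.

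The main obstacle is precisely this two-sided bound, i.e.\ the \emph{invertibility} of the Gram operator, which is equivalent to the lower Riesz bound for $e(\Sigma)$. Upper (Bessel) bounds are comparatively soft, but keeping the combined system uniformly bounded below---across the finitely many slabs, the finite frequency shifts, and the infinite families $\L$ and $\tfrac1h\Z$---requires a quantitative, stable version of the one-dimensional theory rather than its bare statement: one needs control of how Riesz constants degrade under combination and under small perturbations of the cut levels and translation vectors. I would secure this by choosing the $y$-lattice shifts and the phases in general position and running a perturbation/symbol argument (the same engine that underlies Theorem~\ref{result for intervals:p1}) to certify that the symbol stays away from $0$, with a compactness/continuity argument over the finitely many slab geometries yielding uniform constants.

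Finally, for the refinement $\Sigma\subseteq\Z^d$ when $S\subseteq[0,1]^d$: applying the ``moreover'' clause of Theorem~\ref{result for intervals:p1} inductively keeps $\L\subseteq\Z^{d-1}$, and choosing the $y$-lattice and all shifts inside $\Z$ (legitimate once $S\subseteq[0,1]^d$, so that $h$ may be taken to be $1$ and the slabs sit in $[0,1]$) keeps the combining lattice-compatible, so the resulting $\Sigma$ lies in $\Z^d$.
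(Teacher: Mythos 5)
There is a genuine gap at the heart of your combining step. You propose to take a \emph{single} frequency set $\L\subset\R^{d-1}$ that is a Riesz basis for $T=\bigcup_k T_k$, observe that it restricts to a frame on each cross-section $T_k$, and then compensate for the ``varying redundancy'' of these frames by pairing with an oversampled lattice of $y$-frequencies chosen so that the exponentials form a tight frame on each slab $J_k$. This cannot produce a Riesz basis: a system that is redundant in both factors is overcomplete, its Landau density exceeds $|S|$, and consequently its Gram operator has a nontrivial kernel on $\ell^2$ --- so the ``symbol bounded away from zero'' step you defer to a perturbation/general-position argument is not merely hard, it is false for the system you have built. Frame information about the cross-sections is simply not enough; a Riesz basis requires \emph{exact} density, and the surplus introduced in one factor cannot be cancelled by a surplus in the other. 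The paper's Lemma \ref{simple case:p2} resolves exactly this tension, but only by demanding a rigid nested structure: Riesz bases $\Xi_1\subset\dotsb\subset\Xi_L$ for the individual pieces in one factor, Riesz bases $\Psi_{\ge 1}\supset\dotsb\supset\Psi_{\ge L}$ for the \emph{decreasing unions} in the other, and the telescoping set $\Sigma=\bigcup_n\Xi_n\times\Psi_{\ge n}$, which has precisely the right density. Nothing in your proposal supplies this nestedness, and it does not come for free: it forces the strengthened induction hypothesis of Theorem \ref{generalmain:p2} (coherent collections of Riesz bases, i.e.\ $X_i\subset X_j\Rightarrow\Xi_i\subset\Xi_j$), which is the real content of the inductive step.

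A second, related problem is the orientation of your slab decomposition. You place the $(d-1)$-dimensional cross-sections $T_k$ in the factor where nested bases would be needed; but the $T_k$ are not nested, not even after translation, so no coherent collection can hand you $\Xi_1\subset\dotsb\subset\Xi_L$ for them. The paper decomposes the other way around, as $\bigcup_j I_{i,j}\times Y_j$ with one-dimensional \emph{intervals} $I_{i,j}$ in the distinguished factor --- these become nested after translating to a common left endpoint, which is what makes the coherent one-dimensional bases of Lemma \ref{corr1:p2} applicable --- and with the disjoint $(d-1)$-dimensional pieces $Y_j$ in the factor where only the unions $Y_{\ge n}$ need bases. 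Finally, even this only covers the case where every line parallel to the first axis meets each set in a single interval; the general case requires the folding Lemma \ref{lem:main:p2} and an auxiliary induction on the number of components, which your outline omits entirely.
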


We take from \cite{KozmaNitzan} the following basic principle. Suppose
you try to construct a Riesz basis by combining Riesz bases of simpler
sets. If the
most natural candidate for a construction of a Riesz basis does not
work, try instead a construction that involves first taking unions and
intersections of your simpler sets, and then combining their Riesz
bases. Take, for example, in the setting of
theorem \ref{result for intervals:p1}, the case where $S=I\cup J$ with
$I\subset [0,1/2]$ and $J\subset [1/2,1]$. Then the most natural
candidate for a Riesz basis might be to take a Riesz basis for $I$ and
a Riesz basis for $J$ and union them. This does not work, but it turns
out that taking Riesz bases for $I\cup (J-1/2)$ and
for $I\cap (J-1/2)$ and taking a union works (under certain conditions). Here we need to
construct a Riesz basis for a union of products, say $\bigcup
X_i\times Y_i$. The natural candidate is to take Riesz bases $\Xi_i$ for $X_i$
and $\Psi_i$ for $Y_i$, and hope that $\bigcup \Xi_i\times\Psi_i$ is a
Riesz basis for $\bigcup X_i\times Y_i$. This does not work. The
correct ``union and intersection version'' is the following
lemma. Denote $Y_{\ge n}=\bigcup_{k=n}^L Y_n$.

\begin{lemma}\label{simple case:p2}
Let $X_1,\dotsc,X_L\subset [0,1]^{a}$ be some sets and let
$Y_1,\dotsc,Y_L\subset [0,1]^b$ be pairwise disjoint sets. Assume $\Xi_{1}\subset \dotsb\subset \Xi_{L}\subset\Z^a$ satisfy
that $e(\Xi_{n})$ is a Riesz
basis for $X_{n}$. Assume
further that $\Psi_{\ge 1}\supset\dotsb\supset \Psi_{\ge L}$ are
subsets of $\Z^b$ such that
that $e(\Psi_{\ge n})$ is a Riesz
basis of $Y_{\ge n}$. Define
\[
\Sigma:=\bigcup_{n=1}^L \Xi_{n}\times \Psi_{\ge n}.
\]
Then $e(\Sigma)$ is a Riesz basis for $\bigcup_{n=1}^L X_n\times Y_n$.
\end{lemma}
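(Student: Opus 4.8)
The plan is to pass to the standard ``integer frequency'' picture. Since $\Xi_n\subset\Z^a$ and $\Psi_{\ge n}\subset\Z^b$, the product indices lie in $\Z^{a+b}$, so every $e_\sigma$ with $\sigma\in\Sigma$ is a member of the canonical orthonormal basis $\{e(k):k\in\Z^{a+b}\}$ of $L^2([0,1]^{a+b})$. Hence $e(\Sigma)$ is automatically an orthonormal basis of the closed subspace $H_\Sigma:=\overline{\spn}\,e(\Sigma)$. Writing $\rho_S$ for the restriction $L^2([0,1]^{a+b})\to L^2(S)$, a system is a Riesz basis precisely when it is the image of an orthonormal basis under a bounded invertible operator; so \emph{it suffices to prove that $T:=\rho_S|_{H_\Sigma}\colon H_\Sigma\to L^2(S)$ is an isomorphism onto $L^2(S)$}. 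Set $V:=L^2([0,1]^a)$, $W:=L^2([0,1]^b)$, $A_n:=\overline{\spn}\{e(\xi):\xi\in\Xi_n\}\subset V$ and $B_n:=\overline{\spn}\{e(\psi):\psi\in\Psi_{\ge n}\}\subset W$, so $A_1\subseteq\dots\subseteq A_L$ and $B_1\supseteq\dots\supseteq B_L$. With $C_n:=B_n\ominus B_{n+1}$ one checks $H_\Sigma=\bigoplus_{n=1}^L A_n\otimes C_n$ (orthogonal, since the $C_n$ are mutually orthogonal), while the disjointness of the $Y_m$ gives $L^2(S)=\bigoplus_{m=1}^L L^2(X_m)\otimes L^2(Y_m)$.

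Two inputs will be used repeatedly. First, the hypothesis that $e(\Xi_n)$ is a Riesz basis of $L^2(X_n)$ means exactly that restriction $A_n\to L^2(X_n)$ is an isomorphism; tensoring with the identity on $W$ yields $\int_{X_n}\|H(x,\cdot)\|_W^2\,dx\asymp\|H\|^2$ for every $H\in A_n\otimes W$. Second, the hypothesis that $e(\Psi_{\ge m})$ is a Riesz basis of $L^2(Y_{\ge m})$ means restriction $B_m\to L^2(Y_{\ge m})$ is an isomorphism; in particular, for $h\in B_1$ one has $\sum_k\|h\|^2_{L^2(Y_k)}=\|h\|^2_{L^2(Y_{\ge1})}\asymp\|h\|_W^2$. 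The upper bound $\|T\|\le1$ is then immediate: restriction is a contraction and $S$ is a disjoint union, so $\|\rho_S P\|^2=\sum_m\|P\|^2_{L^2(X_m\times Y_m)}=\int_S|P|^2\le\|P\|^2$.

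The content is the lower bound, which I would prove by induction on $L$. For $L=1$, $e(\Sigma)$ is the tensor product $e(\Xi_1)\otimes e(\Psi_{\ge1})$ of two Riesz bases, hence a Riesz basis of $X_1\times Y_{\ge1}=X_1\times Y_1$. For the step, split off the first layer: with $S':=\bigcup_{n=2}^L X_n\times Y_n$ and $\Sigma':=\bigcup_{n=2}^L\Xi_n\times\Psi_{\ge n}$, the layers $2,\dots,L$ satisfy the hypotheses, so $e(\Sigma')$ is a Riesz basis of $L^2(S')$. Relative to the orthogonal splittings $H_\Sigma=(A_1\otimes C_1)\oplus H_{\Sigma'}$ and $L^2(S)=L^2(X_1\times Y_1)\oplus L^2(S')$, the operator $T$ becomes a $2\times2$ block matrix whose lower-right corner is $\rho_{S'}|_{H_{\Sigma'}}$, invertible by induction. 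By the Schur complement criterion, $T$ is an isomorphism iff the effective operator $\mathcal S\colon A_1\otimes C_1\to L^2(X_1\times Y_1)$ is, where $\mathcal S F=(F-G)|_{X_1\times Y_1}$ and $G\in H_{\Sigma'}$ is the unique element with $(F-G)|_{S'}=0$. Equivalently: associating to each $F$ the unique $K:=F-G\in H_\Sigma$ whose $A_1\otimes C_1$-component is $F$ and which vanishes on $S'$, one must show $\|K|_{X_1\times Y_1}\|\asymp\|F\|$.

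The heart of the matter — and the step I expect to be the main obstacle — is precisely this estimate for $\mathcal S$. The second input gives, for each fixed $x$, $\|K(x,\cdot)\|^2_{L^2(Y_{\ge1})}\gtrsim\|F(x,\cdot)\|_W^2$ (as $F(x,\cdot)$ is the $C_1$-component of $K(x,\cdot)\in B_1$), whence $\int_{X_1}\|K(x,\cdot)\|^2_{L^2(Y_{\ge1})}\,dx\gtrsim\|F\|^2$ by the first input; but $\|K(x,\cdot)\|^2_{L^2(Y_1)}=\|K(x,\cdot)\|^2_{L^2(Y_{\ge1})}-\|K(x,\cdot)\|^2_{L^2(Y_{\ge2})}$, and the subtracted $Y_{\ge2}$-mass is forced to vanish only at points $x$ lying in the relevant $X_m$ ($m\ge2$), not on all of $X_1$. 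There is no escape by a triangular or term-by-term argument: the block matrix relating the spaces $C_n$ to the pieces $L^2(Y_m)$ need not have invertible diagonal, so the layers genuinely couple. Closing the bound requires using the two nestings $\Xi_1\subseteq\dots\subseteq\Xi_L$ and $\Psi_{\ge1}\supseteq\dots\supseteq\Psi_{\ge L}$ \emph{simultaneously}, together with the disjointness of the $Y_m$ and the single global Riesz basis $e(\Psi_{\ge1})$ of the whole union $Y_{\ge1}$ — the higher-dimensional incarnation of the one-dimensional ``union and intersection'' combining mechanism of \cite{KozmaNitzan}. The surjectivity of $\mathcal S$ is handled by the analogous argument for the adjoint; once $\mathcal S$ is shown to be an isomorphism, the Schur criterion makes $T$ an isomorphism, completing the induction and the proof.
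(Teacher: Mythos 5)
Your reduction is set up correctly as far as it goes: passing to the restriction operator $T=\rho_S|_{H_\Sigma}$, identifying $H_\Sigma=\bigoplus_n A_n\otimes C_n$, handling $L=1$ by tensoring two Riesz bases, and reducing the induction step via the Schur complement to the invertibility of $\mathcal S\colon A_1\otimes C_1\to L^2(X_1\times Y_1)$ are all sound. But the proof has a genuine gap, and you name it yourself: the two-sided estimate $\|K|_{X_1\times Y_1}\|\asymp\|F\|$ for the Schur complement is never established. This is not a routine verification left to the reader --- it is the entire analytic content of the lemma. Your own diagnosis shows why it is hard: the corrector $G\in H_{\Sigma'}$ chosen to kill $F$ on $S'$ contributes an uncontrolled amount on $X_1\times Y_1$, and the identity $\|K(x,\cdot)\|^2_{L^2(Y_1)}=\|K(x,\cdot)\|^2_{L^2(Y_{\ge1})}-\|K(x,\cdot)\|^2_{L^2(Y_{\ge2})}$ leaves a subtracted term that vanishes only for $x$ in $\bigcup_{m\ge2}X_m$, not on all of $X_1$. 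Saying that one must use the two nestings ``simultaneously'' describes the obstacle; it does not overcome it. As written, the argument proves nothing beyond the case $L=1$.

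For comparison, the paper does not attempt an exact layer-by-layer inversion at all, which is precisely what lets it avoid the coupling you ran into. Instead of showing that each layer is invertible modulo the others, it proves for each $n$ only a \emph{lossy} inequality: for the frame half, $\sum_{(\xi,\psi)\in\Sigma}|\langle f,e_{(\xi,\psi)}\rangle|^2\ge c\|f_n\|^2-\sum_{k<n}\|f_k\|^2$, where $f_k$ is the restriction of $f$ to $X_k\times Y_k$; these are then combined with exponentially decreasing weights $\delta_n$ so that the penalties $\sum_{k<n}\|f_k\|^2$ are absorbed by the terms with smaller index. The key mechanism inside each such inequality is an order of operations: for fixed $\xi\in\Xi_n$ the partial Fourier transform $F(y)=\int f_{\ge n}(x,y)e^{-2\pi i\langle\xi,x\rangle}\,dx$ is supported on $Y_{\ge n}$, so the frame bound for $\Psi_{\ge n}$ over $Y_{\ge n}$ applies; one then discards $Y_{\ge n+1}$, notes $f_{\ge n}=f_n$ on $Y_n$ with $x$-support in $X_n$, and applies the frame bound for $\Xi_n$ over $X_n$. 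The Riesz sequence half is the mirror image, decomposing $\Sigma$ into $\Sigma_n=(\Xi_n\setminus\Xi_{n-1})\times\Psi_{\ge n}$ and paying penalties indexed by $j>n$. If you want to salvage your Schur-complement scheme you would need to prove a quantitative lower bound for $\mathcal S$ with constants controlled uniformly through the induction, and the coupling you identified is exactly why no such bound is available layer by layer; I would recommend abandoning exact invertibility of the layers in favour of the weighted telescoping above.
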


To get a feeling for the condition $\Xi_1\subset\dotsb\subset \Xi_L$
(which in particular means that the $X_n$ must have increasing sizes
for the lemma to have any hope of being applicable) one should first note that without this condition $\Sigma$ might not
even have the right density to be a Riesz basis (see
\cite{L67a,L67b,NO} for Landau's theorem, explaining the role of density). The definition of  $\Sigma$ can be
reorganized in two other ways which emphasize the issue of density (in
particular as a union of disjoint sets). The first is
\[
\Sigma = \bigcup_{n=1}^L \Xi_n\times (\Psi_{\ge n}\setminus \Psi_{\ge n+1})
\]
(where $\Psi_{\ge L+1}:=\emptyset$). This version has the mnemonic property of being almost a ``union of
products of Riesz bases'' except, of course, we are not requiring from
$\Psi_{\ge j}\setminus \Psi_{\ge j+1}$ to be a Riesz basis for
$Y_j$. The other version is
\[
\Sigma = \bigcup_{n=1}^L (\Xi_{n}\setminus\Xi_{n-1})\times
(\Psi_{\ge n})
\]
(where $\Xi_0:=\emptyset$). This version will be used in
the proof (\S \ref{sec:mainlemma} below).

More remarks on the relation with \cite{KozmaNitzan} will be given after the
proof, in \S 6.

\section{preliminaries}
\subsection{Systems of vectors in Hilbert spaces}

Let $H$ be a separable Hilbert space. A system of vectors $\{f_n\}\subseteq H$ is called a \textit{Riesz basis} if it is the image, under a bounded invertible operator, of an orthonormal basis. This means that $\{f_n\}$ is a Riesz basis if and only if it is complete in $H$ and satisfies the following inequality for all sequences $\{a_n\}\in l^2$,
\begin{equation}\label{riesz sequence}
c\sum|a_n|^2\leq \|\sum a_nf_n\|^2\leq C\sum |a_n|^2,
\end{equation}
where $c$ and $C$ are some positive constants which depend on the
system $f_n$ but not on the $a_n$. A system
$\{f_n\}\subseteq H$ which satisfies condition (\ref{riesz sequence}),
but is not necessarily complete, is called a \textit{Riesz sequence}.

A simple duality argument shows that $\{f_n\}$ is a Riesz basis if and only if it is minimal (i.e.\ no vector from the system lies in the closed span of the rest) and satisfies the following inequality for every $f\in H$,
\begin{equation}\label{frame}
c\|f\|^2\leq\sum |\langle f, f_n\rangle|^2\leq C\|f\|^2
\end{equation}
where $c$ and $C$ are some positive constants (in fact, the same constants as in (\ref{riesz sequence})). A system $\{f_n\}\subseteq H$ which satisfies condition (\ref{frame}), but is not necessarily minimal, is called a \textit{frame}.

In particular, this discussion implies the following:
\begin{lemma}\label{rb is rs and frame}
A system of vectors in a Hilbert space is a Riesz basis if and only if it is both a Riesz sequence and a frame.
\end{lemma}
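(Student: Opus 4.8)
The two implications follow almost at once from the characterizations recorded above, so the plan is to separate them and isolate the single step that carries genuine content. For the forward implication, suppose $\{f_n\}$ is a Riesz basis. Then by the definition given through (\ref{riesz sequence}) it satisfies that inequality, hence is a Riesz sequence; and the duality discussion preceding the lemma shows that a Riesz basis satisfies (\ref{frame}), so it is a frame as well. Nothing further is required in this direction.

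For the reverse implication, assume $\{f_n\}$ is simultaneously a Riesz sequence and a frame. Being a Riesz sequence, it satisfies (\ref{riesz sequence}); by the first characterization of a Riesz basis (completeness together with (\ref{riesz sequence})), it remains only to establish completeness. Here I would exploit the frame property, whose lower bound forces completeness directly: if $f\in H$ is orthogonal to every $f_n$, then $\sum_n |\langle f, f_n\rangle|^2 = 0$, and the lower bound in (\ref{frame}) gives $c\|f\|^2 \le 0$, whence $f=0$. Thus the orthogonal complement of $\overline{\spn}\{f_n\}$ is trivial, i.e.\ $\{f_n\}$ is complete, and combined with (\ref{riesz sequence}) this exhibits $\{f_n\}$ as a Riesz basis.

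I expect no serious obstacle: the statement is essentially a bookkeeping synthesis of the equivalent descriptions already in place. The only step with mathematical content is the passage from the frame lower bound to completeness. An entirely symmetric alternative would instead derive \emph{minimality} from the Riesz-sequence lower bound and then invoke the stated ``minimal plus frame'' characterization; that route argues by contradiction, approximating some $f_m$ by finite combinations $\sum_{n\neq m} a_n f_n$ and noting that the error $f_m - \sum_{n\neq m} a_n f_n$ is a combination whose $m$-th coefficient equals $1$, so (\ref{riesz sequence}) keeps its norm bounded below by $\sqrt{c}>0$, preventing convergence to $f_m$. Either argument is short; I would present the completeness route, since deducing completeness from the frame lower bound is a single line, whereas the minimality argument requires the approximation step above.
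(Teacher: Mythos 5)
Your proof is correct and matches the paper's treatment: the paper gives no explicit argument, stating only that the lemma follows from the two characterizations (completeness plus \eqref{riesz sequence}, and minimality plus \eqref{frame}), and your completeness-from-the-frame-lower-bound step is precisely the bookkeeping being left implicit. Both your main route and the alternative minimality route are sound.
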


In this paper we are interested in frames, Riesz sequences and Riesz
bases for $L^2(X)$ of the form $e(\Xi)$. Often we will be lax and
simply say that $\Xi$ is a frame, Riesz sequence or Riesz basis for
$X$. An important property of such sets is the complementation
property:
\begin{lemma}\label{lem:complm}A $\Xi\subset\mathbb{Z}^d$ is a frame over
  an $X\subset[0,1]^d$ if and only if $\mathbb{Z}^d\setminus\Xi$ is a Riesz
  sequence over $[0,1]^d\setminus X$.
\end{lemma}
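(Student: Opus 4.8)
The plan is to leverage the single fact that $e(\Z^d)$ is an orthonormal basis of $L^2([0,1]^d)$, which converts both conditions into statements about the relative position of two closed subspaces. Write $H=L^2([0,1]^d)$ and let $\{e_n\}_{n\in\Z^d}$ be this orthonormal basis. I would introduce two closed subspaces of $H$: the subspace $W$ of functions vanishing a.e.\ off $X$ (canonically identified with $L^2(X)$, and on which the orthogonal projection $P_W$ is multiplication by $\1_X$), and the subspace $V:=\overline{\spn}\,\{e_\eta:\eta\in\Z^d\setminus\Xi\}$ with orthogonal projection $P_V$. After extending functions on $X$ by zero, the inner products $\langle f,e_n\rangle$ are unchanged, so Parseval's identity in $H$ is available throughout.

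The first step is to discard the upper bounds in both conditions, since they are automatic: for a frame it is Bessel's inequality, $\sum_{\xi\in\Xi}|\langle f,e_\xi\rangle|^2\le\|f\|^2$ (a subfamily of an orthonormal basis), and for the Riesz sequence it is $\|g\|_{L^2(Y)}^2\le\|g\|_{L^2([0,1]^d)}^2=\sum|a_\eta|^2$, where $Y=[0,1]^d\setminus X$. The content therefore lies entirely in the lower bounds. For the frame, splitting Parseval for $f\in W$ as $\sum_{\xi\in\Xi}|\langle f,e_\xi\rangle|^2+\sum_{\eta\notin\Xi}|\langle f,e_\eta\rangle|^2=\|f\|^2$ and using $\sum_{\eta\notin\Xi}|\langle f,e_\eta\rangle|^2=\|P_Vf\|^2$, the lower frame bound $\sum_{\xi\in\Xi}|\langle f,e_\xi\rangle|^2\ge c\|f\|^2$ becomes
\[
\|P_Vf\|^2\le(1-c)\|f\|^2\qquad(f\in W).
\]
Dually, for $g=\sum_{\eta\notin\Xi}a_\eta e_\eta\in V$ one has $\sum|a_\eta|^2=\|g\|^2$ and $\|g\|_{L^2(Y)}^2=\|g\|^2-\|P_Wg\|^2$, so the lower Riesz-sequence bound becomes
\[
\|P_Wg\|^2\le(1-c')\|g\|^2\qquad(g\in V).
\]

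The key observation is that the first inequality says precisely $\|P_VP_W\|<1$ and the second says precisely $\|P_WP_V\|<1$: indeed, since $P_W$ is a projection, $\sup_{f\in W,\,\|f\|=1}\|P_Vf\|=\|P_VP_W\|$, and symmetrically for the other side. Because $P_V$ and $P_W$ are self-adjoint, $(P_VP_W)^*=P_WP_V$, so $\|P_VP_W\|=\|P_WP_V\|$, and the two conditions coincide; one even reads off that the sharp constants agree, $c=c'=1-\|P_VP_W\|^2$. I do not anticipate a genuine obstacle here, as the whole argument is a Parseval/duality computation; the only points demanding care are the bookkeeping that the upper bounds come for free and that the two lower bounds translate into complementary projection-norm inequalities, after which the equality of operator norms is immediate from self-adjointness. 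It is worth noting that this pairing also explains the matching of categories in the statement—``frame'' (possibly non-minimal) with ``Riesz sequence'' (possibly non-complete)—rather than ``Riesz basis'' on both sides.
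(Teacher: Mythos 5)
Your argument is correct, and it is essentially the route the paper takes: the paper reduces the statement to the abstract two-projections fact (Lemma~\ref{lem:Hilbertcomplm}) and cites Matei--Meyer for its proof, while you have simply written out that Parseval/projection-norm computation in full, including the observation $\|P_VP_W\|=\|P_WP_V\|$ by self-adjointness. Your version is self-contained and in fact proves the general Hilbert-space lemma, not just the exponential case.
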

Lemma \ref{lem:complm} follows from the following general fact:
\begin{lemma}\label{lem:Hilbertcomplm}Let $H$ be a separable Hilbert
  space and let $\{e_n\}_{n\in I}$ be an orthonormal basis in $H$. Let
  $L\subset H$ be a closed subspace of $H$ and let $L^\perp$ be its
  orthogonal complement. Denote by $P$ the orthogonal projection to
  $L$ and by $P^\perp$ the orthogonal projection to $L^\perp$. Then
  for a subset $\Xi\subset I$ we have that $\{Pe_n\}_{n\in \Xi}$ is a
  frame in $L$ if and only if $\{P^\perp e_n\}_{n\in I\setminus\Xi}$ is
  a Riesz sequence in $L^\perp$.
\end{lemma}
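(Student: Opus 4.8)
The plan is to reduce both conditions to a single statement about the angle between $L$ and the closed subspace
\[
M:=\overline{\spn}\{e_n:n\in I\setminus\Xi\}.
\]
Write $Q$ for the orthogonal projection onto $M$. Since $\{e_n\}_{n\in I}$ is an orthonormal basis, $H=M^\perp\oplus M$ orthogonally with $M^\perp=\overline{\spn}\{e_n:n\in\Xi\}$, and hence $\|Qf\|^2=\sum_{n\in I\setminus\Xi}|\langle f,e_n\rangle|^2$ for every $f\in H$.

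First I would treat the frame side. For $f\in L$ we have $\langle f,Pe_n\rangle=\langle Pf,e_n\rangle=\langle f,e_n\rangle$, since $P$ is self-adjoint and $Pf=f$. Hence, by Parseval,
\[
\sum_{n\in\Xi}|\langle f,Pe_n\rangle|^2=\sum_{n\in\Xi}|\langle f,e_n\rangle|^2=\|f\|^2-\|Qf\|^2.
\]
In particular the upper frame bound holds automatically with constant $1$, and $\{Pe_n\}_{n\in\Xi}$ is a frame in $L$ if and only if there is $c>0$ with $\|Qf\|^2\le(1-c)\|f\|^2$ for all $f\in L$; that is, if and only if $\|Q|_L\|<1$.

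Next the Riesz sequence side. Given $(a_n)_{n\in I\setminus\Xi}\in\ell^2$, put $g:=\sum_{n\in I\setminus\Xi}a_ne_n\in M$, so that $\|g\|^2=\sum|a_n|^2$ and $\sum a_nP^\perp e_n=P^\perp g$. Since $\|P^\perp g\|^2=\|g\|^2-\|Pg\|^2$, the upper Riesz bound is again automatic with constant $1$, and $\{P^\perp e_n\}_{n\in I\setminus\Xi}$ is a Riesz sequence in $L^\perp$ if and only if there is $c'>0$ with $\|Pg\|^2\le(1-c')\|g\|^2$ for all $g\in M$; that is, if and only if $\|P|_M\|<1$.

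It therefore remains only to show $\|Q|_L\|=\|P|_M\|$, and here is the one genuinely non-clerical step. For any $x\in H$ we have $Px\in L$, so $\|QPx\|\le\|Q|_L\|\,\|x\|$, while for $f\in L$ we have $Pf=f$ and hence $\|QPf\|=\|Qf\|$; together these give $\|Q|_L\|=\|QP\|$, and symmetrically $\|P|_M\|=\|PQ\|$. Since $P$ and $Q$ are self-adjoint, $(QP)^*=PQ$, so $\|QP\|=\|PQ\|$, whence $\|Q|_L\|=\|P|_M\|$. Consequently both conditions are equivalent to $\|QP\|<1$; tracking the constants even shows they may be taken equal, $c=c'=1-\|QP\|^2$, consistent with the assertion that the frame and Riesz-sequence constants coincide. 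I expect no serious obstacle beyond correctly noting that the upper bounds are free and invoking the adjoint identity $\|QP\|=\|PQ\|$.
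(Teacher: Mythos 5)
Your proof is correct. The paper itself does not prove Lemma \ref{lem:Hilbertcomplm} at all --- it simply refers the reader to Matei and Meyer \cite[Proposition 2.1]{MM09} --- so you have supplied a self-contained argument where the paper has none. Your reduction of both conditions to the single ``angle'' condition $\|QP\|<1$, with the upper frame/Riesz bounds coming for free from Parseval and the equivalence closed by the adjoint identity $\|QP\|=\|PQ\|$, is exactly the standard way to prove this complementation principle, and it also justifies the paper's parenthetical remark that the two sets of constants coincide (here $c=c'=1-\|QP\|^2$). The only point worth making explicit is that on the Riesz-sequence side the inequality is first verified for finitely supported sequences, i.e.\ on a dense subspace of $M$, and then extended to all of $M$ by continuity before one can speak of $\|P|_M\|$; you implicitly do this and it is harmless.
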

See Matei and Meyer \cite[Proposition 2.1]{MM09} for a proof.

\ifnum1=2
\subsection{Some corollaries of theorem \ref{result for intervals:p1}}

The following lemma was a main ingredient in the proof of theorem \ref{result for intervals:p1}, its proof can be found in \cite{KozmaNitzan}.

\begin{lemma}\label{lem:main:p1}
Fix a positive integer $N$. Given a set $S\subset [0,1]$, define
\begin{align}
&S_n=\Big\{t\in\Big[0,\frac1N\Big]:t+\frac jN\in S\textrm{ for exactly $n$
  values of $j\in\{0,\dotsc,N-1\}$}\Big\}\nonumber\\
&S_{\ge n}=\bigcup_{k=n}^N S_k\label{eq:defAgen}
\end{align}
If there exist $\L_1,\dotsc,\L_N\subseteq N\Z$ such that
the system $E({\L_n})$ is a Riesz basis in $L^2(S_{\ge n})$, then the system $E(\L)$, where
\[
\L=\bigcup_{j=1}^{N}(\L_j+j),
\]
is a Riesz basis in $L^2(S)$.
\end{lemma}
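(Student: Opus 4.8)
The plan is to pass to a vector-valued reformulation and then check, via Lemma~\ref{rb is rs and frame}, that $E(\L)$ is simultaneously a frame and a Riesz sequence over $S$. First I would introduce the isometry $U\colon L^2([0,1])\to L^2([0,1/N];\CC^N)$ given by $(Uf)(t)=(f(t+k/N))_{k=0}^{N-1}$ for $t\in[0,1/N]$. Under $U$ the space $L^2(S)$ becomes a fibered space $\int^{\oplus}V(t)\,dt$, where $V(t)=\spn\{\mathbf e_k:t+k/N\in S\}$ is a coordinate subspace of $\CC^N$ whose dimension equals the multiplicity $m(t)$ (so $\dim V(t)=n$ precisely on $S_n$). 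Since $\L_j\subset N\Z$, a direct computation gives $U\big(e^{2\pi i(\lambda+j)t}\big|_S\big)(t)=e^{2\pi i(\lambda+j)t}\,P_{V(t)}v_j$, where $v_j=(e^{2\pi ijk/N})_{k=0}^{N-1}$ is a column of the DFT matrix and $P_{V(t)}$ is the coordinate projection; the vectors $v_1,\dots,v_N$ form an orthogonal basis of $\CC^N$. The shifts $j=1,\dots,N$ occupy distinct residues mod $N$, so the pieces $\L_j+j$ are disjoint and $\L\subset\Z$. The task is to show these images form a Riesz basis of $\int^\oplus V(t)\,dt$.

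For the upper bounds, write $G_j(t)=\langle g(t),v_j\rangle_{\CC^N}$ and $H_j(t)=e^{-2\pi ijt}G_j(t)$; a one-line computation yields the key identity $\langle g,\,e^{2\pi i(\lambda+j)t}\rangle_{L^2(S)}=\langle H_j,e^{2\pi i\lambda t}\rangle_{L^2([0,1/N])}$. Because $\L_j\subset N\Z$, the family $\{e^{2\pi i\lambda t}\}_{\lambda\in N\Z}$ is orthogonal on $[0,1/N]$, so each $E(\L_j)$ is Bessel there with a uniform constant; a finite union of Bessel systems is Bessel, which gives the upper frame and Riesz-sequence bounds with constants depending only on $N$ and the Riesz constants of the $E(\L_n)$.

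The engine is the lower frame bound, which I would establish by a cascade over the multiplicity levels $j=1,\dots,N$. A priori $H_j$ is supported only on $S_{\ge 1}$, but its overflow onto $\{m<j\}$ is controlled pointwise by $|G_j|\le\sqrt N\,\|g(\cdot)\|$ together with the smallness of $g$ on $\{m<j\}$ obtained at earlier stages; subtracting it and applying the frame lower bound of $E(\L_j)$ on $L^2(S_{\ge j})$ bounds $\|G_j\|_{L^2(S_{\ge j})}$ by $\epsilon_1+\dots+\epsilon_j$, where $\epsilon_i^2=\sum_{\lambda\in\L_i}|\langle H_i,e^{2\pi i\lambda t}\rangle|^2$. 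On an $S_m$-fiber with active set $K$ of size $m$, the smallness of $G_1,\dots,G_m$ forces $g$ itself to be small upon inverting the matrix $(\omega_k^{\,i})_{i=1..m,\,k\in K}$ with $\omega_k=e^{2\pi ik/N}$. The decisive point is that these minors carry \emph{consecutive} exponents $1,\dots,m$, hence factor as $\prod_k\omega_k\cdot\prod_{k<k'}(\omega_{k'}-\omega_k)\neq 0$ into ordinary Vandermonde determinants in the distinct nonzero nodes $\omega_k$; as there are finitely many sets $K$, their inverses are bounded by a constant depending only on $N$. No arbitrary DFT minor ever appears, which is exactly why the statement holds for \emph{every} $N$, including composite ones. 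Summing the stagewise estimates gives $\|g\|^2\lesssim_N\sum_j\epsilon_j^2$, so $E(\L)$ is a frame over $S$.

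The main obstacle is the remaining assertion that $E(\L)$ is a Riesz sequence over $S$: its synthesis lower bound is \emph{not} fiberwise, since on $S_m$ the fiber map $\CC^N\to V(t)$ has an $(N-m)$-dimensional kernel, so the cascade above cannot be run directly. I would instead dualize through the complementation Lemma~\ref{lem:complm}: $E(\L)$ is a Riesz sequence over $S$ if and only if $E(\Z\setminus\L)$ is a frame over $[0,1]\setminus S$. Here $(S^c)_{\ge n}=\{t:m(t)\le N-n\}$, and $\Z\setminus\L=\bigcup_{j=1}^N\big((N\Z\setminus\L_j)+j\big)$, where—applying Lemma~\ref{lem:complm} to each $\L_j$ after rescaling $[0,1/N]$ to $[0,1]$, $\L_j$ being a Riesz sequence over $S_{\ge j}$—the set $N\Z\setminus\L_j$ is a frame over $\{m\le j-1\}=(S^c)_{\ge N-j+1}$. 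Running the frame cascade of the previous paragraph on $S^c$ then succeeds because, after relabeling the levels, the Vandermonde minors that arise again carry consecutive exponents (now a descending block ending at $N$) and so remain invertible with $N$-dependent bounds. Checking that this complementary decomposition feeds the cascade with consecutive-exponent minors—so that the argument closes for all $N$—is the crux. With $E(\L)$ both a frame and a Riesz sequence over $S$, Lemma~\ref{rb is rs and frame} shows that $E(\L)$ is a Riesz basis of $L^2(S)$.
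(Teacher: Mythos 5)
Your proposal is correct and follows essentially the same route as the paper: the frame half rests on the folding identity $\langle g, e^{2\pi i(\lambda+j)t}\rangle_{L^2(S)}=\langle H_j,e^{2\pi i\lambda t}\rangle_{L^2([0,1/N])}$ together with invertibility of the consecutive-exponent Vandermonde minors, and the Riesz-sequence half is deduced by complementation (Lemma~\ref{lem:complm}, i.e.\ Matei--Meyer) from the frame property of the complementary system over $[0,1]\setminus S$ --- which is exactly the paper's argument (the simplification it credits to Olevski\u\i). The only differences are presentational: you phrase the folding as a vector-valued fibration and control the ``overflow'' onto low-multiplicity fibers by an inductive cascade, whereas the paper avoids the overflow altogether by working with $f_{\ge n}$ and a weighted summation; both bookkeepings close the same estimate.
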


Combining this Lemma with theorem \ref{result for intervals:p1} we obtain the following corollaries, which were first stated in \cite{KozmaNitzan}.

\begin{corollary}\label{corr1:p2}
Let $S\subseteq [0,1]$ be a union of $L$ intervals and fix $N>0$. Assume that $m/N\leq|S|<(m+1)/N$ where $m$ is a positive integer. Then $\L$ of theorem 1 can be chosen to satisfy
    \[
    \cup_{j=0}^{m-2L-1}(N\Z+j)\subseteq \L \subseteq \cup_{j=0}^{m+2L}(N\Z+j).
    \]
\end{corollary}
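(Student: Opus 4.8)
The plan is to feed the folding Lemma~\ref{lem:main:p1} with an \emph{optimal} choice of the spectra $\Lambda_n$, and then read off the two inclusions from how many of these spectra are forced to be full lattices and how many are forced to be empty. Throughout, write $g(t)=\#\{j\in\{0,\dots,N-1\}:t+j/N\in S\}$ for $t\in[0,1/N)$, so that $S_{\ge n}=\{t:g(t)\ge n\}$ in the notation of Lemma~\ref{lem:main:p1}. The first step is to control the two extreme levels of $g$. Since $S$ is a union of $L$ intervals $I_1,\dots,I_L$ of lengths $\ell_1,\dots,\ell_L$, and the $N$ points $t+j/N$ are equally spaced with gap $1/N$, each $I_i$ contains $c_i(t)$ of them with $|c_i(t)-\ell_i N|<1$. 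Summing over $i$ gives $\bigl|g(t)-N|S|\bigr|<L$ for every $t$, whence $\min g\ge m-L$ (using $N|S|\ge m$) and $\max g\le m+L$ (using $N|S|<m+1$).

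With this in hand I would choose the $\Lambda_n\subset N\Z$ as follows. For $n\le \min g$ the superlevel set $S_{\ge n}$ is all of $[0,1/N]$, for which $E(N\Z)$ is an orthogonal basis; take $\Lambda_n:=N\Z$. For $\min g<n\le\max g$ the set $S_{\ge n}$ is a nonempty proper union of intervals in $[0,1/N]$; rescaling by the factor $N$ turns it into a union of intervals in $[0,1]$, to which Theorem~\ref{result for intervals:p1} supplies an integer spectrum $\tilde\Lambda_n\subset\Z$, and since dilation is a unitary equivalence of the relevant $L^2$ spaces carrying $E(\tilde\Lambda_n)$ to $E(N\tilde\Lambda_n)$, the set $\Lambda_n:=N\tilde\Lambda_n\subset N\Z$ is a Riesz basis for $S_{\ge n}$. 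For $n>\max g$ we have $S_{\ge n}=\emptyset$ and set $\Lambda_n:=\emptyset$. Lemma~\ref{lem:main:p1} then makes $E(\Lambda)$ a Riesz basis for $S$, with $\Lambda=\bigcup_{j=1}^N(\Lambda_j+j)$.

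The final step is the bookkeeping that converts the level thresholds into the coset sandwich. Multiplying by the unimodular $e^{-2\pi i t}$ is unitary on $L^2(S)$ and sends $E(\Lambda)$ to $E(\Lambda-1)$, which is therefore still a Riesz basis; I replace $\Lambda$ by $\Lambda':=\Lambda-1=\bigcup_{i=0}^{N-1}(\Lambda_{i+1}+i)$, so that the residue $i$ now carries the spectrum of level $i+1$. Since $\Lambda_{i+1}=N\Z$ whenever $i+1\le\min g$, every full coset $N\Z+i$ with $i\le\min g-1$ lies in $\Lambda'$; as $\min g-1\ge m-L-1\ge m-2L-1$, this yields $\bigcup_{j=0}^{m-2L-1}(N\Z+j)\subseteq\Lambda'$. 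Conversely $\Lambda_{i+1}=\emptyset$ once $i+1>\max g$, so the only residues occurring in $\Lambda'$ satisfy $i\le\max g-1\le m+L-1\le m+2L$, giving $\Lambda'\subseteq\bigcup_{j=0}^{m+2L}(N\Z+j)$.

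The genuinely delicate point is not any single estimate but the alignment: one must notice that the ``full'' levels ($S_{\ge n}=[0,1/N]$) are exactly the ones contributing complete cosets to the spectrum, that they occupy the bottom range of thresholds, and that a single integer translation is needed to shift these complete cosets down to the residues $0,1,\dots$ demanded by the statement; without that translation the coset of residue $0$ would wrongly require $S_{\ge N}=[0,1/N]$, i.e.\ $S=[0,1]$. The measure estimate $\bigl|g-N|S|\bigr|<L$ is the only computation, and the factor $2L$ in the statement (rather than the $L$ the argument actually delivers) leaves comfortable slack.
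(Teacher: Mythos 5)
Your proof is correct and follows essentially the same route as the paper: apply the folding lemma with $\Lambda_n = N\Z$ on the levels where $S_{\ge n}=[0,1/N]$, $\Lambda_n=\emptyset$ on the empty levels, and spectra from Theorem \ref{result for intervals:p1} (dilated into $N\Z$) for the intermediate levels. The only differences are cosmetic: your pointwise counting gives the sharper bounds $m\pm L$ where the paper counts whole grid intervals to get $m\pm 2L$, and you spell out the dilation and the translation-by-one bookkeeping that the paper leaves implicit.
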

\begin{proof}
Divide $[0,1]$ into the intervals $[j/N,(j+1)/N]$ and note that, since $m/N\leq|S|<(m+1)/N$ and $S$ is a union of $L$ intervals, at least $m-2L$ of the intervals
$[j/N,(j+1)/N]$ belong to $S$ and no more then $m+2L+1$ of them intersect $S$. This implies that among the corresponding sets $S_{\ge n}$ ate least $m-2L$ are equal to $[0,1/N]$ and no more then $m+2L$ are non--empty. By applying Theorem \ref{result for intervals:p1}, all these sets have a Riesz basis with frequencies from $N\Z$. Lemma \label{lem:main:prev} now implies the result.
\end{proof}
\begin{corollary}\label{corr2:p1}
 Let $S_1\subset S_2\subset\dotsb\subset S_K\subseteq [0,1]$ be a family of sets, all of which are finite unions of intervals. Then there exist sequences $\L_1\subset \L_2\subset\dotsb\subset \L_K\subset \Z$ such that $E(\L_k)$ is a Riesz basis over $S_k$.
\end{corollary}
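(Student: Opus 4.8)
The plan is to produce the whole chain $\Lambda_1\subseteq\dots\subseteq\Lambda_K$ in one stroke, by applying Corollary \ref{corr1:p2} to each $S_k$ \emph{separately} and then reading the nesting off from the explicit density windows that corollary supplies, provided the modulus $N$ is taken large enough. Throughout I write $A_p:=\bigcup_{j=0}^{p-1}(N\Z+j)$ for the union of the first $p$ residue classes mod $N$, so that $A_p\subseteq A_q$ whenever $p\le q$.

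For the setup, note that since the family is finite we may fix $L$ to be an upper bound for the number of intervals comprising any $S_k$, and let $\delta$ be the smallest strictly positive value among the differences $|S_k|-|S_{k-1}|\ge 0$ (if every such difference vanishes then all the $S_k$ agree up to null sets and a single Riesz basis from Theorem \ref{result for intervals:p1} serves for all of them). Choose an integer $N$ so large that $N|S_k|\ge 2L+1$ for every $k$ and that $N\delta>4L+2$, and set $m_k:=\lfloor N|S_k|\rfloor$, so that $m_k/N\le |S_k|<(m_k+1)/N$ and $m_1\le\dots\le m_K$.

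I then construct the $\Lambda_k$ by processing $k=1,\dots,K$ in turn. If $|S_k|=|S_{k-1}|$, then $S_k\setminus S_{k-1}$ is null, whence $L^2(S_k)=L^2(S_{k-1})$, and I simply set $\Lambda_k:=\Lambda_{k-1}$. Otherwise I invoke Corollary \ref{corr1:p2} to choose $\Lambda_k\subset\Z$ with $E(\Lambda_k)$ a Riesz basis over $S_k$ and
\[
A_{m_k-2L}\subseteq\Lambda_k\subseteq A_{m_k+2L+1}
\]
(this window is permissible because $L$ dominates the interval count of $S_k$, so it only widens the window Corollary \ref{corr1:p2} guarantees, while $N|S_k|\ge 2L+1$ makes $m_k\ge 2L+1$ positive as that corollary requires). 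A copy made in the first case shares the value $m_k=m_{k-1}$ with its source, so in every case the constructed set satisfies $A_{m_k-2L}\subseteq\Lambda_k\subseteq A_{m_k+2L+1}$.

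It then remains only to verify the nesting $\Lambda_{k-1}\subseteq\Lambda_k$. If $m_k=m_{k-1}$ this is immediate since $\Lambda_k=\Lambda_{k-1}$. If $m_k>m_{k-1}$, then from $m_k>N|S_k|-1$ and $m_{k-1}\le N|S_{k-1}|$ one gets $m_k-m_{k-1}>N\delta-1>4L+1$, so $m_{k-1}+2L+1\le m_k-2L$, and therefore
\[
\Lambda_{k-1}\subseteq A_{m_{k-1}+2L+1}\subseteq A_{m_k-2L}\subseteq\Lambda_k,
\]
as required. The main point — and the only place any care is needed — is this compatibility of the windows: it forces me both to reuse the same frequency set across runs of equal measure and to spend the positive gap $\delta$ (available precisely because the family is finite) on pushing consecutive moduli $m_{k-1},m_k$ more than $4L$ apart, which is exactly what turns the independently chosen sandwiches of Corollary \ref{corr1:p2} into a genuinely nested chain.
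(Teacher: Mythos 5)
Your proposal is correct and takes essentially the paper's route: fix one sufficiently large common $N$, apply Corollary \ref{corr1:p2} to every $S_k$, and deduce $\Lambda_{k-1}\subseteq\Lambda_k$ from the fact that the guaranteed residue-class windows $\bigcup_{j=0}^{m-2L-1}(N\Z+j)\subseteq\Lambda\subseteq\bigcup_{j=0}^{m+2L}(N\Z+j)$ become nested once consecutive $m_k$'s are forced more than $4L$ apart. If anything, you are more careful than the paper's own (very terse) proof, which only imposes a window-separation inequality on the $m_k$'s and does not address the degenerate case $|S_{k-1}|=|S_k|$, where no choice of $N$ can separate the windows; your device of reusing $\Lambda_{k-1}$ in that case closes this small gap.
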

\begin{proof}
 Denote the number of intervals in each set $S_k$ by $L_k$. Choose $N$ so large that for each $k$ we have $m_k/N\leq|S_k|<(m_k+1)/N$ and $m_k+2L_k+1\leq m_{k+1}+2L_{k+1}$. The statement now follows from corollary (\ref{corr1:p2}).
\end{proof}

We end this section with another lemma from \cite{KozmaNitzan}. The formulation here was not explicitly stated in \cite{KozmaNitzan}, but it is an immediate consequence of claim 3 and lemma 4 there.
\begin{lemma}\label{ichsa}
Let $S\subset[0,1]$ be a union of $L$ intervals and $N$ be a positive integer. Then, the sets $S_{\geq n}$ defined in lemma \ref{lem:main:p2} are all unions of at most $L$ intervals (when considered cyclically). Moreover, there exist infinitely many $N$ for which all these sets are unions of at most $L-1$ intervals (when considered cyclically).
\end{lemma}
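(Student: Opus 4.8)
My plan is to reduce the combinatorial claim to a statement about the super-level sets of a single step function obtained by folding. Regard $S\subset[0,1]$ cyclically as a subset of the circle $\T=\R/\Z$, and on the small circle $\T_N:=\R/(\tfrac1N\Z)$ define the counting function $g(t)=\#\{\,j\in\{0,\dots,N-1\}:t+j/N\in S\,\}$. The identification $t=0\sim t=1/N$ is legitimate: as $t$ runs through $[0,1/N)$ the multiset of fibre points $\{t+j/N\}$ returns to itself. Directly from the definitions $S_n=\{t:\#\{j:t+j/N\in S\}=n\}$ and $S_{\ge n}=\bigcup_{k=n}^N S_k$ one has $S_{\ge n}=\{t:g(t)\ge n\}$, so the number of cyclic intervals constituting $S_{\ge n}$ is exactly the number of arcs of the super-level set $\{g\ge n\}$.

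First I would describe the jumps of $g$. Writing $S=\bigcup_{i=1}^L[a_i,b_i]$ cyclically, as $t$ increases each fibre point $t+j/N$ sweeps an arc of $\T$ of length $1/N$, and collectively the fibre points sweep all of $\T$ exactly once; hence each left endpoint $a_i$ is met by a single fibre point, at the one parameter value $t=(Na_i\bmod 1)/N$, where $g$ jumps up by one, while each right endpoint $b_i$ produces a single downward jump. Thus $g$ is a nonnegative integer step function on $\T_N$ whose total upward jump mass is the number of cyclic left endpoints, namely at most $L$, with equal downward mass. Traversing $\T_N$, one enters $\{g\ge n\}$ exactly at the upward crossings of level $n$ and leaves at the downward crossings, so the number of arcs of $\{g\ge n\}$ equals the number of upward crossings of level $n$; summing over $n$ gives $\sum_n(\text{arcs of }\{g\ge n\})=\text{total upward jump mass}\le L$. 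In particular each $S_{\ge n}$ is a union of at most $L$ cyclic intervals, which is the first assertion.

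For the refinement I would use the same accounting: $\max_n(\text{arcs of }\{g\ge n\})\le L-1$ as soon as the upward jumps are spread over at least two distinct levels, equivalently as soon as $g$ takes at least three values, equivalently the cyclic pattern of up/down jumps is not perfectly alternating. So the task becomes producing infinitely many $N$ for which $g$ is not two-valued, and I would split on the arithmetic of the endpoints. If two like-type endpoints satisfy $a_i-a_{i'}\in\Q$ (or $b_i-b_{i'}\in\Q$), then for all $N$ in a suitable arithmetic progression their jumps coincide into a jump of size two, which is an upward crossing of two consecutive levels and forces the arc counts onto $\ge 2$ levels; if instead some $a_i-b_j\in\Q$, the coincidence cancels an up-jump against a down-jump and lowers the total mass to $\le L-1$. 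In the remaining case all pairwise differences of endpoints are irrational, and I would invoke Weyl equidistribution of $N\mapsto(Na_1,\dots,Nb_L)\bmod 1$ to find a positive density of $N$ for which the $2L$ folded endpoints sit in a non-alternating cyclic order, again placing two like-type jumps side by side.

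The main obstacle is precisely this second assertion; the first is bookkeeping. Its content lies in the range $L\ge 2$ (for a single interval only one super-level set is a genuine arc), and the difficulty is to control the cyclic order of the folded endpoints uniformly across all arithmetic relations among the $a_i,b_j$: I must rule out that the binary/alternating pattern persists for every $N$, run the equidistribution argument on the correct subtorus when the endpoints are only partially rationally dependent, and exclude accidental triple coincidences (a forced double jump meeting a jump of the opposite type) along the chosen progression, again by a counting/equidistribution estimate. I expect the cleanest implementation to follow claim 3 and lemma 4 of \cite{KozmaNitzan}, which package exactly this case analysis.
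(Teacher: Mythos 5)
Your treatment of the first assertion is essentially correct: identifying $S_{\ge n}$ with the super-level set $\{g\ge n\}$ of the folded counting function $g$ and bounding the number of arcs by the number of locations where $g$ jumps up (the folds of the at most $L$ left endpoints) is a sound argument. Two small repairs: a level with $\{g\ge n\}$ equal to the whole circle has one arc but no crossings, so your summed identity over $n$ needs that caveat (harmless, since one arc is fine); and for a coincidence of two \emph{right} endpoints the clean statement is that on a circle the number of arcs of $\{g\ge n\}$ equals the number of down-crossings as well as the number of up-crossings, so merging two down-jumps also caps every level at $L-1$.

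The second assertion, however, is where the entire content of the lemma lies, and your proposal does not prove it. Your reduction is right (one needs infinitely many $N$ for which the folded endpoints either collide or fail to alternate between left and right type), and your two rational cases are fine. The gap is the remaining case, which you propose to handle by ``Weyl equidistribution \dots\ on the correct subtorus.'' That step is not routine, and as sketched it can fail: the closure of $\{N(a_1,\dots,b_L)\bmod 1\}$ is a closed subgroup of $\T^{2L}$ determined by \emph{all} integer linear relations among the endpoints (e.g.\ $a_1+a_2-2b_1\in\Q$ is possible with every pairwise difference irrational), and there exist cosets of subtori lying \emph{entirely} inside the alternating locus --- for $L=2$, the coset $\{(s,t,s+\tfrac12,t+\tfrac12):s,t\}$, on which the two left folds and the two right folds form antipodal pairs and therefore always link. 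So ``find an open non-alternating patch on the subtorus'' is precisely the statement that needs proof; one must show that whenever the orbit closure is trapped in the alternating locus, rational relations of the pairwise-difference type are forced, so that your coincidence cases take over. You do not do this; you end by deferring to claim 3 and lemma 4 of \cite{KozmaNitzan}. Note that the paper itself gives no proof of this lemma either --- it imports it verbatim as a consequence of claim 3 and lemma 4 of \cite{KozmaNitzan} --- so your attempt amounts to a correct proof of the easy half, a correct reformulation of the hard half, and then the same citation the paper uses; as a self-contained proof it has a genuine hole exactly at the hard step.
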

\fi
\section{Proof of the main lemma}\label{sec:mainlemma}
In this section we prove lemma \ref{simple case:p2} which is the main
component of the proof of Theorem \ref{result for intervals:p1}. Recall
that we are given sets $X_j\subset \R^a$ and $Y_j\subset \R^b$ and
corresponding $\Xi_{j}\subset\Z^a,\Psi_{\ge j}\subset \Z^b$ and we wish to show
for $\Sigma=\bigcup (\Xi_{j}\setminus\Xi_{j+1})\times\Psi_{\ge j}$
that $e(\Sigma)$ is a Riesz basis for $S=\bigcup X_i\times Y_i$.
%
We will use Lemma \ref{rb is rs and frame} and show that $e(\Sigma)$ is
both a frame and a Riesz sequence for $L^2(S)$.

Throughout the proof we denote by
$(x,y):=(x_1,\dotsc,x_a,y_1,...,y_b)$ a point in $[0,1]^{a+b}$ and by
$(\xi,\psi):=(\xi_1,\dotsc,\xi_a,\psi_1,...,\psi_b)$ a point in
$\Z^{a+b}$. We denote by $e_{(\xi,\psi)}$ the function $e^{2\pi i
  \langle (\xi,\psi),(x,y)\rangle}$.

\subsection*{Frame}
To show that $e(\Sigma)$ is a frame in
$L^2(S)$ we need to show that for any $f\in L^2(S)$
\[
\sum_{(\xi,\psi)\in\Sigma}|\langle f, e_{(\xi,\psi)}\rangle|^2>c_1||f||^2
\]
(the right inequality in the definition of a frame, (\ref{frame}), is
satisfied because $S\subset[0,1]^{a+b}$ and $\Sigma\subset\Z^{a+b}$). For $k\in\{1,\dotsc,L\}$, denote by $f_k$ the restriction of $f$ to
$X_k\times Y_k$.
It is enough to show that for every $n=1,\dotsc,L$ we have
\begin{equation}\label{what we need for frame:p2}
\sum_{(\xi,\psi)\in\Sigma}|\langle f, e_{(\xi,\psi)}\rangle|^2\geq c_2\|f_n\|^2-\sum_{k=1}^{n-1}\|f_k\|^2,
\end{equation}
where $c_2$ is a positive constant, not depending on
$f$. 
Indeed, the inequalities in (\ref{what we need for frame:p2}) imply
that for any sequence of positive numbers $\{\delta_n\}_{n=1}^N$ with
$\sum\delta_n=1$ we have 
\begin{align*}
\sum_{(\xi,\psi)\in\Sigma}|\langle f, e_{(\xi,\psi)}\rangle|^2&=
\sum_{n=1}^L\delta_n\sum_{(\xi,\psi)\in\Sigma}|\langle f, e_{(\xi,\psi)}\rangle|^2\\
&\stackrel{\textrm{(\ref{what we need for frame:p2})}}{\geq}
 \sum_{n=1}^L\delta_n\Big(c_2\|f_n\|^2-\sum_{k=1}^{n-1}\|f_k\|^2\Big)
= \sum_{n=1}^L\Big(c_2\delta_n-\sum_{k=n+1}^{L}\delta_k\Big)\|f_n\|^2.
\end{align*}
We get that, if the sequence $\{\delta_n\}$ satisfies
\[
\delta_n>\frac 2{c_2}\sum_{k=n+1}^{L}\delta_k,\qquad\forall n\in\{1,\dotsc,L\}
\]
(essentially it needs to decrease exponentially), then for $c_1=\frac
12 c_2\min\delta_n$
\[
\sum_{(\xi,\psi)\in\Sigma}|\langle f, e_{(\xi,\psi)}\rangle|^2
\geq c_1\sum_{n=1}^L\|f_n\|^2=c_1\|f\|^2.
\]
as needed.

Hence we need to show (\ref{what we need for frame:p2}). Fix therefore
some $n\in\{1,\dotsc,L\}$ until the end of the proof. Now, for any $x,y\in\CC$, $|x+y|^2\ge \frac12
|x|^2-|y|^2$. So,
\[
|\langle f,e_{(\xi,\psi)}\rangle|^2\ge
\frac12\Big|\Big\langle\sum_{k=n}^Lf_k,e_{(\xi,\psi)}\Big\rangle\Big|^2 -
\Big|\Big\langle\sum_{k=1}^{n-1}f_k,e_{(\xi,\psi)}\Big\rangle\Big|^2.
\]
For brevity denote $f_{\ge n}=\sum_{k=n}^Lf_k$. Summing over all
$(\xi,\psi)$ in $\Sigma$ gives
\begin{align*}
\sum_{(\xi,\psi)\in\Sigma}|\langle f, e_{(\xi,\psi)}\rangle|^2
&\ge \frac{1}{2}\sum_{(\xi,\psi)\in\Sigma}|\langle f_{\ge n},e_{(\xi,\psi)}\rangle|^2-
  \sum_{(\xi,\psi)\in\Sigma}\Big|\Big\langle \sum_{k=1}^{n-1}f_k, e_{(\xi,\psi)}\Big\rangle\Big|^2\\
&\stackrel{\textrm{\clap{$(*)$}}}{\geq}
  \frac{1}{2}\sum_{(\xi,\psi)\in\Sigma}|\langle f_{\ge n}, e_{(\xi,\psi)}\rangle|^2-
  \Big\|\sum_{k=1}^{n-1}f_k\Big\|^2\\
&\stackrel{\textrm{\clap{$(**)$}}}{=}
  \frac{1}{2}\sum_{(\xi,\psi)\in\Sigma}|\langle f_{\ge n}, e_{(\xi,\psi)}\rangle|^2-
  \sum_{k=1}^{n-1}\|f_k\|^2
\end{align*}
where $(*)$ is because $\Sigma\subset\Z^d$ and $(**)$ since $f_k$ have disjoint supports. Hence, to obtain (\ref{what we need for frame:p2}) it remains to show that
\begin{equation}\label{more of what we need for frame:p2l1}
\sum_{(\xi,\psi)\in\Sigma}|\langle f_{\ge n}, e_{(\xi,\psi)}\rangle|^2\geq   c\|f_n\|^2
\end{equation}
where $c$ is a positive constant not depending on $f$.

Fix some $\xi\in\Xi_n$ and consider the function of $b$ variables
\[
F(y)=\int_{[0,1]^a} f_{\geq n}(x,y)e^{-2\pi i \langle\xi,x\rangle}\,dx
\]
and note that it
is supported on $Y_{\ge n}$. Since ${\Psi}_{\ge n}$ is a Riesz basis
for this set we have
\begin{align}
\sum_{\psi\in\Psi_{\ge n}}|\langle f_{\ge n},e_{(\xi,\psi)}\rangle|^2
&=\sum_{\psi\in\Psi_{\ge n}}\Big|\int_{[0,1]^{a+b}}f_{\ge
  n}(x,y)\overline{e_{(\xi,\psi)}(x,y)}\,dx\,dy\Big|^2\nonumber\\
&=\sum_{\psi\in\Psi_{\ge
    n}}\Big|\int_{[0,1]^{b}}F(y)\overline{e_{\psi}(y)}\,dy\Big|^2\nonumber\\
&\ge c\int_{Y_{\ge n}}|F(y)|^2\,dy\ge c\int_{Y_n}|F(y)|^2\,dy \nonumber\\
&=c\int_{Y_n}\Big|\int_{X_n}f_n(x,y)e^{-2\pi i \langle\xi,x\rangle}\,dx\Big|^2\label{eq:sumoverXi}
\end{align}
where the last equality follows from the fact that when $y\in Y_n$ we have $f_{\geq n}(x,y)=f_{ n}(x,y)$ and this function, as a function of $x$, is supported on $X_n$.

We now sum this over $\xi\in\Xi_n$. Recall that
$\Xi_{n}\times\Psi_{\ge n}\subset\Sigma$ and that
$e(\Xi_n)$ is a Riesz basis for $L^2(X_n)$. We get
\begin{align*}
\sum_{(\xi,\psi)\in\Sigma}|\langle f_{\ge n}, e_{(\xi,\psi)}\rangle|^2
& \geq
\sum_{\xi\in\Xi_{n},\psi\in\Psi_{\ge n}}|\langle f_{\ge n}, e_{(\xi,\psi)}\rangle|^2\\
&\stackrel{\textrm{(\ref{eq:sumoverXi})}}{\ge}
c\sum_{\xi\in\Xi_n}\int_{Y_n}\Big|\int_{X_n} f_{ n}(x,y)e^{-2\pi i \langle\xi,x\rangle}\,dx\Big|^2dy\\
&=c\int_{Y_n}\sum_{\xi\in\Xi_n}\Big|\int_{X_n} f_{ n}(x,y)e^{-2\pi i\langle \xi,x\rangle}\,dx\Big|^2dy\\
&\ge c\int_{Y_n}\int_{X_n}|f_{ n}(x,y)|^2\,dx\,dy=
c\|f_n\|^2.
\end{align*}
Hence, (\ref{more of what we need for frame:p2l1}) holds and the system is a frame.

\subsection*{Riesz sequence} We now show that $e(\Sigma)$ is a Riesz
sequence in $L^2(S)$, i.e.\ that for any \emph{finitely supported} sequence $a_{(\xi,\psi)}\in l^2(\Sigma)$,
\[
\bigg\Vert\sum_{(\xi,\psi)\in\Sigma}a_{(\xi,\psi)} e_{(\xi,\psi)}\bigg\Vert^2_{L^2(S)}\ge
c\sum_{(\xi,\psi)\in\Sigma}|a_{(\xi,\psi)}|^2
\]
(again, the other inequality in (\ref{riesz sequence}) follows from
$S\subset[0,1]^d$ and $\Sigma\subset\Z^d$).
We apply a strategy similar to the one we used in the first
(``frame'') part, but
we decompose $\Sigma$ rather than $S$. Define therefore
$\Sigma_n=(\Xi_{n}\setminus\Xi_{n-1})\times\Psi_{\ge n}$. With this
definition a similar argument to the one used in the first part shows that it is enough to show that for every $n=1,\dotsc,L$ we have
\begin{equation}\label{what we need for r-s:p2l1}
\int_{S}\Big|\sum_{(\xi,\psi)\in\Sigma}a_{(\xi,\psi)}
e_{(\xi,\psi)}\Big|^2\geq c\sum_{(\xi,\psi)\in\Sigma_n}|a_{(\xi,\psi)}|^2-\sum_{j=n+1}^{L}\sum_{(\xi,\psi)\in\Sigma_j}|a_{(\xi,\psi)}|^2.
\end{equation}
To this end choose $n\in\{1,\dotsc,L\}$. We have,
\begin{align*}
\lefteqn{\int_{S}\Big|\sum_{(\xi,\psi)\in\Sigma}a_{(\xi,\psi)} e_{(\xi,\psi)}\Big|^2\,dx\,dy}\qquad\\&\geq
\frac{1}{2}\int_{S}\Big|\sum_{j=1}^n\sum_{(\xi,\psi)\in\Sigma_j}a_{(\xi,\psi)} e_{(\xi,\psi)}\Big|^2-\int_{S}\Big|\sum_{j=n+1}^{L}\sum_{(\xi,\psi)\in\Sigma_j}a_{(\xi,\psi)} e_{(\xi,\psi)}\Big|^2\\
&\ge\frac{1}{2}\int_{S}\Big|\sum_{j=1}^n\sum_{(\xi,\psi)\in\Sigma_j}a_{(\xi,\psi)} e_{(\xi,\psi)}\Big|^2-\sum_{j=n+1}^{L}\sum_{(\xi,\psi)\in\Sigma_j}|a_{(\xi,\psi)}|^2
\end{align*}
where the second inequality is due to $S\subset[0,1]^{a+b}$ and $\Sigma\subset\Z^{a+b}$. Denote for
brevity
\[
f=\1_S\cdot \sum_{j=1}^n \sum_{(\xi,\psi)\in\Sigma_j}a_{(\xi,\psi)} e_{(\xi,\psi)}
\]
and get that to prove (\ref{what we need for r-s:p2l1}) it remains to show that
\begin{equation}\label{more we need for r-s:p2l1}
\int_{S}|f(x,y)|^2\,dx\,dy\geq c\sum_{(\xi,\psi)\in\Sigma_n}|a_{(\xi,\psi)}|^2.
\end{equation}
Here is where the fact that $\Xi_{n}$ and $\Psi_{\ge n}$ are Riesz bases
will enter.

We first apply that $\Xi_n$ is a Riesz sequence over $X_k$ for all
$k\geq n$, specifically the left inequality in (\ref{riesz sequence}),
 and get, for any $y$,
\begin{align}
\int_{X_k}|f(x,y)|^2\,dx
&=\int_{X_k}\Big|\sum_{j=1}^n\sum_{\xi\in\Xi_j\setminus\Xi_{j-1}}
\Big(\sum_{\psi\in\Psi_{\ge j}}a_{(\xi,\psi)}e^{2\pi i \langle\psi,y\rangle}\Big)e^{2\pi i \langle\xi,x\rangle}\Big|^2dx\nonumber\\
\textrm{By (\ref{riesz sequence})}\qquad
&\geq c \sum_{j=1}^n\sum_{\xi\in\Xi_j\setminus\Xi_{j-1}}
\Big|\sum_{\psi\in\Psi_{\ge j}}a_{(\xi,\psi)}e^{2\pi i \langle\psi,y\rangle}\Big|^2\nonumber\\
\textrm{dropping terms}\qquad
& \geq
c\sum_{\xi\in\Xi_n\setminus\Xi_{n-1}}\Big|\sum_{\psi\in\Psi_{\ge n}}
a_{(\xi,\psi)}e^{2\pi i \langle\psi,y\rangle}\Big|^2\label{eq:onex}
\end{align}
Integrating over $y$ and using the fact that $\Psi_{\ge n}$
is a Riesz basis over $Y_{\ge n}$ we get
\begin{align*}
\int_{S}|f(x,y)|^2\,dx\,dy
&\geq \sum_{k=n}^L\int_{Y_k}\int_{X_k}|f(x,y)|^2\,dx\,dy\\
& \stackrel{\textrm{(\ref{eq:onex})}}{\geq}
c\sum_{k=n}^L\int_{Y_k}\sum_{\xi\in\Xi_n\setminus\Xi_{n-1}}
\big|\sum_{\psi\in\Psi_{\ge n}}a_{(\xi,\psi)}e^{2\pi i \langle\psi,y\rangle}\big|^2\,dy\\
&= c\sum_{\xi\in\Xi_n\setminus\Xi_{n-1}}\int_{Y_{\geq
    n}}\big|\sum_{\psi\in\Psi_{\ge n}}a_{(\xi,\psi)}e^{2\pi i \langle\psi,y\rangle}\big|^2\,dy\\
\textrm{Since $\Psi_{\ge n}$ is a Riesz basis}\qquad
&\geq c\sum_{\xi\in\Xi_n\setminus\Xi_{n-1}}\sum_{\psi\in\Psi_{\ge n}}|a_{(\xi,\psi)}|^2
\end{align*}
which asserts (\ref{more we need for r-s:p2l1}) and completes the
proof.\qed

\begin{remark*}
The ``frame'' and ``Riesz sequence'' parts are in fact independent in
the following sense. If $\Xi_1\subset\dotsb\subset\Xi_L$ and $\Psi_{\ge
  1}\supset\dotsb\supset\Psi_{\ge L}$ are only assumed to be frames,
then $\Sigma$ will be a frame; while if they are assumed to be Riesz
sequences then $\Sigma$ will be a Riesz sequence. In the next section
we will see that in another setting this remark allows to shorten the
proof.
\end{remark*}

\section{Folding}\label{sec:fold}
In this section we prove a version of the main lemma
of \cite{KozmaNitzan}. That lemma stated that if certain
``foldings'' of a set have Riesz bases, then one may
construct a Riesz basis for the original set too. The result here,
while stated in $d$-dimensions, is essentially one dimensional and we
will perform the same transformations performed in \cite{KozmaNitzan}
on the first coordinate only. The details are below. The proof is also
similar to the proof there, but
with a simplification suggested by A.\ Olevski\u\i.
Throughout this section we will denote either by
$(t,s):=(t,s_1,...,s_{d-1})$ or by $x$ a point in $[0,1]^d$; and by
$(\l,\d):=(\l,\d_1,...,\d_{d-1})$ or $\xi$ a point in $\Z^d$. 

Fix a positive integer $N$. Given a set $X\subset [0,1]^d$, define

  \begin{align}
X_n=&\Big\{t\in\Big[0,\frac1N\Big]\times[0,1]^{d-1}:\Big(t+\frac jN ,s\Big)\in X\\ &\textrm{ for exactly $n$
  values of $j\in\{0,\dotsc,N-1\}$}\Big\}\nonumber\\
X_{\ge n}=&\bigcup_{k=n}^N X_k\label{eq:defAgen}
\end{align}

\begin{lemma}\label{lem:main:p2}
If there exist $\Xi_1,\dotsc,\Xi_N\subseteq N\Z\times\Z^{d-1}$ such that
the system $e({\Xi_n})$ is a Riesz basis in $L^2(X_{\ge n})$, then the system $e(\Xi)$, where
\[
\Xi=\bigcup_{n=1}^{N}(\Xi_n+(n,0,\dotsc,0)),
\]
is a Riesz basis in $L^2(S)$.
\end{lemma}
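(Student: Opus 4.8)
The plan is to verify, through Lemma \ref{rb is rs and frame}, that $e(\Xi)$ is simultaneously a frame and a Riesz sequence over $L^2(X)$, the upper bounds in \eqref{riesz sequence} and \eqref{frame} being automatic from $X\subset[0,1]^d$ and $\Xi\subset\Z^d$. Following the remark after Lemma \ref{simple case:p2}, I would treat the two halves independently, modelling each on the corresponding half of that proof. The first move is the unfolding in the first coordinate: write the first coordinate as $t+j/N$ with $t\in[0,\frac1N]$, $j\in\{0,\dots,N-1\}$, set $a_j(t,s)=f(t+j/N,s)$, and let $R:=[0,\frac1N]\times[0,1]^{d-1}$ be the base box. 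A direct computation shows that for $\eta\in\Xi_n$ one has $\langle f,e_{\eta+(n,0,\dots,0)}\rangle=\langle G_n,e_\eta\rangle_{L^2(R)}$, where $G_n(t,s)=e^{-2\pi int}\sum_j a_j(t,s)\,\omega^{-nj}$ ($\omega=e^{2\pi i/N}$) is the $n$-th discrete Fourier component of the fiber of $f$. The shift by $n$ is engineered precisely so that the layer-$n$ frequencies $\Xi_n+(n,0,\dots,0)$ see $f$ only through $G_n$, and $e(\Xi_n)$ is the given Riesz basis of $L^2(X_{\ge n})$ (the functions $e_\eta$, $\eta\in N\Z\times\Z^{d-1}$, being orthonormal on $R$). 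Note $|G_n(t,s)|=|\widehat a(n)|$ with $\widehat a(n)=\sum_{j\in J}a_j\omega^{-nj}$, where $J=J(t,s)$ is the occupied set, so that $(t,s)\in X_{n'}$ means $|J|=n'$.

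For the frame bound I would fix $n$, split $G_n=G_n\1_{X_{\ge n}}+G_n\1_{X_{<n}}$ with $X_{<n}=\bigcup_{k<n}X_k$, and use $|a+b|^2\ge\frac12|a|^2-|b|^2$ together with the frame bound of $e(\Xi_n)$ over $X_{\ge n}$ and Bessel's inequality for the remainder, obtaining $\sum_{\eta\in\Xi_n}|\langle f,e_{\eta+(n,0,\dots,0)}\rangle|^2\ge\frac c2\int_{X_{\ge n}}|G_n|^2-\int_{X_{<n}}|G_n|^2$. Combining these over $n$ with exponentially decreasing weights $\delta_n$, exactly as in the frame part of Lemma \ref{simple case:p2}, and regrouping by the multiplicity strata $X_{n'}$, the contribution of the stratum $X_{n'}$ becomes the quadratic form $\frac c2\sum_{n\le n'}\delta_n|\widehat a(n)|^2-\sum_{n>n'}\delta_n|\widehat a(n)|^2$ in the fiber $a\in\CC^{J}$.

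Here is the ingredient that goes beyond Lemma \ref{simple case:p2}. On the stratum $X_{n'}$ only the \emph{low} frequencies $n\le n'$ appear with positive (frame) sign, and I would recover the whole fiber from them: the vector $(\widehat a(n))_{n=1}^{n'}$ equals $M_J a$, where $M_J=(\omega^{-nj})_{1\le n\le n',\,j\in J}$ is a square Vandermonde-type matrix in the distinct nonzero nodes $\omega^{-j}$, hence invertible, so $\sum_{n\le n'}|\widehat a(n)|^2\ge\sigma^2\|a\|^2$. \textbf{The main obstacle is exactly this cross-layer coupling}: $G_n$ is supported on the entire projection of $X$, not merely on $X_{\ge n}$, so the high-frequency spillover $\sum_{n>n'}\delta_n|\widehat a(n)|^2$ is genuinely present. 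It is defeated by taking the ratio of successive weights $\delta_{n+1}/\delta_n$ small relative to $\sigma^2$ and the frame constant, making the positive low-frequency term dominate the negative one on every stratum simultaneously. \emph{It is essential that $N$ is fixed:} there are then only finitely many columns $J\subseteq\{0,\dots,N-1\}$, so $\sigma:=\min_J\sigma_{\min}(M_J)>0$ is a true positive constant, and this uniform invertibility is what lets a single weight sequence work across all strata, yielding $\sum_{\xi\in\Xi}|\langle f,e_\xi\rangle|^2\ge c'\|f\|^2$.

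The Riesz-sequence bound is dual and I would argue symmetrically. Expanding $\|\sum_\xi a_\xi e_\xi\|_{L^2(X)}^2$ through the same unfolding turns it into $\int_R\sum_{j\in J(t,s)}\big|\sum_{n=1}^N h_n(t,s)\,\omega^{nj}e^{2\pi int}\big|^2$, where $h_n=\sum_{\eta\in\Xi_n}a_{\eta+(n,0,\dots,0)}e_\eta$ is a combination of $e(\Xi_n)$. On full columns the nodes $\omega^{-j}$ give exact orthogonality and the diagonal terms $|J|\sum_n|h_n|^2$ dominate; on partial strata the off-diagonal cross terms are controlled by a tuned Cauchy–Schwarz and absorbed, using the Riesz-sequence lower bound of $e(\Xi_n)$ over $X_{\ge n}$ for the diagonal and the orthonormality of $e(N\Z\times\Z^{d-1})$ on $R$ for the cross terms, once more with the exponentially decreasing weights and the same uniform invertibility of the $M_J$ (now read as a coercivity bound for the synthesis map). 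This gives the lower bound in \eqref{riesz sequence}, and together with the frame bound completes the proof.
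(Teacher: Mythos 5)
Your frame half is essentially the paper's argument (Lemma \ref{lem:oned frame}): unfold the first coordinate, pass to the twisted fiber sums (your $G_n$, the paper's $h_j$), invoke the frame bound of $e(\Xi_n)$ over $X_{\ge n}$, and recover the fiber on the stratum $X_{n'}$ from the uniform invertibility of the square Vandermonde submatrices $M_J$ (finitely many since $N$ is fixed), absorbing the cross-layer spillover with exponentially decreasing weights. The only cosmetic difference is that you fold $f$ first and split $G_n$ afterwards, while the paper splits $f=f_{\ge n}+f_{<n}$ first and folds $f_{\ge n}$, so its $h_j$ is already supported on $X_{\ge n}$; both work. The Riesz-sequence half is where you genuinely diverge. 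The paper does not argue directly at all: it observes (this is the Olevski\u\i{} simplification) that Lemma \ref{lem:oned frame} holds assuming only the frame property, applies it a second time to the complement $Y=[0,1]^d\setminus X$ with the complementary frequency sets $(N\Z+1-n)\times\Z^{d-1}\setminus\Xi_{N+1-n}$ (which are frames for $Y_{\ge n}$ by the Matei--Meyer complementation, Lemma \ref{lem:Hilbertcomplm}), and then complements back to conclude that $\Xi$ is a Riesz sequence. This buys a proof with a single analytic computation and no synthesis-side estimates. Your direct route can be made to work, but as sketched it has a soft spot: on a stratum with occupied column set $J$, $|J|=n'<N$, the pointwise quadratic form $\sum_{j\in J}|\sum_{n=1}^N \tilde h_n\omega^{nj}|^2$ in all $N$ layer functions has rank $n'$ and is degenerate, so no tuning of Cauchy--Schwarz absorbs the cross terms there. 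One must first fix $n$, truncate to the layers $m\le n$ (discarding the tail by Bessel, at the cost of $-\sum_{m>n}\sum|a|^2$, which is what the weights are for), restrict the integral to $\bigcup_{k\ge n}B_k$ where every fiber has at least $n$ occupied slots, and only then invoke coercivity of the fat $n\times|J|$ Vandermonde together with the Riesz-sequence bound of $e(\Xi_n)$ over $X_{\ge n}$. You gesture at exactly these ingredients, so I read this as an under-specified but repairable direct proof, mirroring the Riesz-sequence half of Lemma \ref{simple case:p2}; the paper's duality argument avoids the issue entirely.
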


Clearly, it is equivalent to prove the lemma under the assumptions
that
\begin{equation}\label{eq:defLam}
\Xi_n\subset (N\Z+n)\times\Z^{d-1}\qquad \Xi=\bigcup_{n=1}^N\Xi_n
\end{equation}
(but still requiring that $\Xi_n$ is a Riesz basis for $X_{\ge n}$,
recall that the property of being a Riesz basis is invariant to translations) which will make the notations a little shorter.

We will show that $e(\Xi)$ is a Riesz basis by showing that it is
both a frame and a Riesz sequence (recall lemma \ref{rb is rs and frame}). It turns out that to show that $\Xi$
is a frame it is enough that all $\Xi_j$  are frames. Let us state this
as a Lemma.

\begin{lemma}\label{lem:oned frame}
If $\Xi_n\subset (N\Z+n)\times\Z^{d-1}$ satisfy that
$e(\Xi_n)$ is a frame in $L^2(X_{\ge n})$ for all $n\in\{1,\dotsc,N\}$, then the system $e(\Xi)$
is a frame in $L^2(X)$, where $\Xi$ is given by \eqref{eq:defLam}.

Furthermore, the same holds if $\Xi_n\subset (N\Z-n)\times\Z^{d-1}$.
\end{lemma}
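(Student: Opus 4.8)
The plan is to reduce the frame inequality on $X$ to the frame hypotheses on the folded sets $X_{\ge n}$ via the folding transformation on the first coordinate, and then to assemble the pieces with an exponentially decreasing weight, exactly as in the frame part of Lemma \ref{simple case:p2}. First I would unfold: extend $f\in L^2(X)$ by zero to $[0,1]^d$ and, for each $n\in\{1,\dots,N\}$, define the folded function on $[0,1/N]\times[0,1]^{d-1}$ by
\[
g_n(t,s)=\sum_{j=0}^{N-1} f\Big(t+\tfrac jN,s\Big)\,e^{-2\pi i nj/N}.
\]
Splitting $\int_{[0,1]^d}$ into the $N$ slabs $[j/N,(j+1)/N]\times[0,1]^{d-1}$ and using that $\xi=(\lambda,\delta)\in\Xi_n$ has $\lambda\in N\Z+n$, so that $e^{-2\pi i\lambda j/N}=e^{-2\pi i nj/N}$, a direct computation gives $\langle f,e_\xi\rangle=\langle g_n,e_\xi\rangle$, the right-hand inner product taken over $[0,1/N]\times[0,1]^{d-1}$. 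Since the residues are distinct, $\Xi$ is the disjoint union of the $\Xi_n$, and hence $\sum_{\xi\in\Xi}|\langle f,e_\xi\rangle|^2=\sum_{n=1}^N\sum_{\xi\in\Xi_n}|\langle g_n,e_\xi\rangle|^2$.

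The central observation is a Vandermonde fact about a single fiber. On the part of $X_m$ where the active index set is a fixed $J$ with $|J|=m$, the value $g_n(t,s)$ is the $n$-th discrete Fourier coefficient of the fiber vector $v=(f(t+\tfrac jN,s))_{j\in J}$. The matrix $(e^{-2\pi i nj/N})_{n\in\{1,\dots,m\},\,j\in J}$ is, after factoring $e^{-2\pi ij/N}$ out of each column, a Vandermonde matrix in the distinct nodes $e^{-2\pi ij/N}$, hence invertible. Consequently there is a constant $\sigma>0$, uniform over the finitely many sets $J$, with $\sum_{n=1}^m|g_n(t,s)|^2\ge\sigma^2\|v\|^2$ pointwise on $X_m$; integrating over the slabs gives $\sum_{n=1}^m\|g_n\1_{X_m}\|^2\ge\sigma^2\|f\1_{\tilde X_m}\|^2$, where $\tilde X_m$ is the unfolded region lying over $X_m$, while Parseval for the DFT gives $\sum_{n=1}^N\|g_n\1_{X_m}\|^2=N\|f\1_{\tilde X_m}\|^2$.

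Finally I would combine. For each $n$, split $g_n=g_n\1_{X_{\ge n}}+g_n\1_{X_{<n}}$ (with $X_{<n}=\bigcup_{m<n}X_m$) and use $|a+b|^2\ge\tfrac12|a|^2-|b|^2$ together with the frame hypothesis for $\Xi_n$ over $X_{\ge n}$ and the Bessel bound ($\Xi_n\subset\Z^d$) to obtain $\sum_{\xi\in\Xi_n}|\langle g_n,e_\xi\rangle|^2\ge\tfrac c2\|g_n\1_{X_{\ge n}}\|^2-\|g_n\1_{X_{<n}}\|^2$. Taking weights $\delta_n\ge0$ with $\sum_n\delta_n=1$ (so that $\sum_n\delta_n S_n$ lower-bounds the full sum) and regrouping by region $m$, the contribution of $X_m$ is at least $\int_{X_m}\big(\tfrac c2\sum_{n\le m}\delta_n|g_n|^2-\sum_{n>m}\delta_n|g_n|^2\big)$. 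Bounding the first sum below by $\tfrac c2\delta_m\sigma^2\|v\|^2$ (monotonicity of $\delta$ and the Vandermonde estimate) and the second above by $N\delta_{m+1}\|v\|^2$ (DFT Parseval), this is positive provided $\{\delta_n\}$ decreases fast enough, say $\delta_{n+1}\le\frac{c\sigma^2}{4N}\delta_n$. Summing over $m$ yields $\sum_{\xi\in\Xi}|\langle f,e_\xi\rangle|^2\ge c'\|f\|^2$, which is the frame inequality; the ``furthermore'' for $\Xi_n\subset(N\Z-n)\times\Z^{d-1}$ is identical, the only change being that $g_n$ now picks up $e^{+2\pi inj/N}$, which still yields an invertible Vandermonde matrix.

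I expect the main obstacle to be the interference coming from the mass of $g_n$ on the regions $X_m$ with $m<n$, where $\Xi_n$ is \emph{not} assumed to be a frame. This is precisely what forces the two technical ingredients: the uniform Vandermonde lower bound, needed to recover the full fiber energy on $X_m$ from only the low frequencies $n\le m$, and the exponential weighting, needed so that each region's uncontrolled high-frequency energy is dominated by that same region's captured energy rather than spilling over into a term one cannot absorb.
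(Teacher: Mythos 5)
Your proposal is correct and follows essentially the same route as the paper: fold along the first coordinate with the $N$ twists, use the congruence $\lambda\equiv n\ (\mathrm{mod}\ N)$ to identify $\langle f,e_\xi\rangle$ with an inner product of the folded function, apply the frame hypothesis on $X_{\ge n}$ plus a Bessel bound for the uncontrolled part, recover the fiber energy on $X_m$ from the low twists via a uniform Vandermonde lower bound, and absorb the errors with exponentially decreasing weights. The only (cosmetic) difference is that the paper extracts the error term before folding, by replacing $f$ with $f_{\ge n}$ so that each folded function $h_j$ is genuinely supported on $X_{\ge j}$, whereas you split the folded function $g_n$ into its parts on $X_{\ge n}$ and $X_{<n}$ afterwards.
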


\begin{proof}
To show that $e(\Xi)$ is a frame in
$L^2(X)$ we need to show that for any $f\in L^2(X)$
\[
\sum_{\xi\in\Xi}|\langle f, e_{\xi}\rangle|^2>c_1||f||^2
\]
(the right inequality in the definition of a frame, (\ref{frame}), is
satisfied because $X\subset[0,1]^d$ and $\Xi\subset\Z^d$). For $n\in\{1,\dotsc,N\}$, denote by $f_n$ the restriction of $f$ to
\begin{equation}\label{eq:defAn}
B_n=\Big\{(t,s)\in X: \Big(t+\frac{j}{N},s\Big)\in X \textrm{ for exactly } n
\textrm{ integer } j{'s}\Big\}.
\end{equation}
($X_n$ is the ``folding'' of $B_n$ to $[0,\frac 1N]\times[0,1]^{d-1}$ i.e.\ cutting to $N$ pieces, translating each one to $[0,\frac 1N]\times[0,1]^{d-1}$ and taking a union). For brevity denote $f_{\ge n}=\sum_{k=n}^Nf_k$.
As in the proof of Lemma \ref{simple case:p2}, it is enough to show,
for every $n=1,\dotsc,N$,
that $\sum_\xi|\langle f,e_\xi\rangle|^2\ge
c\|f_n\|^2-C\sum_{k=1}^{n-1}\|f_k\|^2$. And, again as in the proof of
Lemma \ref{simple case:p2}, this can be reduced further to showing that
\begin{equation}\label{more of what we need for frame}
\sum_{\xi\in\Xi}|\langle f_{\ge n}, e_{\xi}\rangle|^2\geq   c\|f_n\|^2
\end{equation}
where $c$ is a positive constant not depending on $f$. The rest of the
proof only examines one $n$ at a time, so let us fix $n$ now.

 For any $(\l,\d)\in (N\Z+j)\times[0,1]^{d-1}$ we have
\begin{align}
\langle f_{\ge n},e_{(\l,\d)}\rangle
  &=\int_{[0,1]^{d-1}}\int_0^1 f_{\ge n}(t,s)\overline{e_{(\l,\d)}(t,s)}\,dt\,ds\nonumber\\
  &=\int_{[0,1]^{d-1}}\int_0^{1/N}\sum_{l=0}^{N-1}f_{\ge n}\Big(t+\frac
  lN,s\Big)e_{(-\l,-\d)}\Big(t+\frac lN,s\Big)\,dt\,ds \nonumber\\
&=\int_{[0,1]^{d-1}}\int_0^{1/N}h_j(t,s)e(-\l t-\langle \d,s\rangle)\,dt\,ds
  =\langle h_j,e_{(\l,\d)}\rangle.\label{eq:dumb}
\end{align}
where
\[
h_j(t,s)=\1_{X_{\ge n}}(t,s)\cdot \sum_{l=0}^{N-1}f_{\ge
  n}\Big(t+\frac lN,s\Big)q_j^l \qquad q_j=e\Big(-\frac jN\Big).
\]
Fix $j\leq n$. Since $e(\Xi_j)$ is a frame for $X_{\ge j}$ and since $h_j$
is supported on $X_{\ge n}\subset X_{\ge j}$ we have
\begin{equation}\label{eq:Riesz}
\sum_{(\l,\d)\in\Xi_j}|\langle f_{\ge n},e_{(\l,\d)}\rangle|^2
\stackrel{\textrm{(\ref{eq:dumb})}}{=}
\sum_{(\l,d)\in\Xi_j}|\langle h_j,e_{(\l,\d)}\rangle|^2
\ge c ||h_j||^2
\end{equation}
where $c$ is the frame constant of $\Xi_j$. In the ``furthermore''
clause of the lemma (where $\Xi_j\subset N\Z-j$) we define
$q_j=e(j/N)$ instead of $e(-j/N)$ and the calculation follows identically.

Summing over $j$ gives
\begin{align}\label{eq:Fh}
\sum_{\xi\in\Xi}|\langle f_{\ge n},e_{\xi}\rangle|^2&\ge
\sum_{j=1}^n\sum_{\xi\in\Xi_j}|\langle f_{\ge n},e_{\xi}\rangle|^2\ge\nonumber\\
&\stackrel{\textrm{(\ref{eq:Riesz})}}{\ge}
c\sum_{j=1}^n||h_j||^2\ge c\sum_{j=1}^n||h_j\cdot\1_{X_n}||^2.
\end{align}

For every particular $(t,s)\in X_n$ the values of $\{h_j(t,s)\}_j$ are given by
applying the $n\times N$ matrix $L=\{q_j^l\}_{j,l}$ to the vector
$\{f_{\ge n}(t+l/N,s)\}_l$. Now, $(t,s)\in X_n$ so exactly $n$ different values of this vector are non-zero.
Considering only these values we may think of $L$ as an
$n\times n$
Vandermonde matrix which is invertible because the numbers $q_j$
are different. Let $C$ be a bound for the norm of the inverse over all such $n\times n$ sub-matrices of $L$. We get
\[
\sum_{j=1}^n |h_j(t,s)|^2 \ge \frac 1C\sum_{l=0}^{N-1} \Big|f_{\ge n}\Big(t+\frac
lN,s\Big)\Big|^2
\]
which we integrate over $(t,s)\in X_n$ to get
\[
\sum_{j=1}^n ||h_j\cdot\1_{X_n}||^2
\ge c\sum_{l=0}^{N-1} \int_{X_n}\Big|f_{\ge n}\Big(t+\frac lN,s\Big)\Big|^2\,dt\,ds
=c||f_{n}||^2.
\]
With this we get (\ref{more of what we need for frame}) and therefore that
$\Xi$ is a frame.
\end{proof}
\begin{proof}[Proof of Lemma \ref{lem:main:p2}]
We apply Lemma \ref{lem:oned frame} twice. The first application is
straightforward with the same $X$ and $\Xi_n$ and we get that $\Xi$ is a frame. For
the second application, let $Y=[0,1]^d\setminus X$ and note that
$Y_{\ge n}=[0,1/N]\times [0,1]^{d-1}\setminus X_{\ge N+1-n}$ (for $Y_n$
the correspondence is not as nice as it is for $Y_{\ge n}$). Since $\Xi_{n}$ is a Riesz basis
for $X_{\ge n}$, in particular a Riesz sequence, by Lemma \ref{lem:Hilbertcomplm}
$(N\Z+1-n)\times\Z^{d-1}\setminus \Xi_{N+1-n}$ is a frame for $Y_{\ge
  n}$. We now apply Lemma
\ref{lem:oned frame} for $Y$ and the complements of $\Xi_n$ (we use
the ``furthermore'' clause to rearrange them in decreasing order) and get
that
\[
\bigcup_{n=1}^N (N\Z+1-n)\times \Z^{d-1}\setminus \Xi_{N+1-n}
\]
is a frame for $Y$ (we used here that a translation of a frame is also
a frame, to solve $+1$ problems). But this set is exactly $\Z^d\setminus
\Xi$ and using lemma \ref{lem:Hilbertcomplm} again we
get that $\Xi$ is a Riesz sequence for $X$, and we are done.
\end{proof}

We end this section with another lemma from \cite{KozmaNitzan}. It is
a consequence of claim 3 and lemma 4 there.
\begin{lemma}\label{ichsa}
Let $X\subset[0,1]$ be a union of $L$ intervals and $N$ be a positive
integer. Then, the sets $X_{\geq n}$ defined before Lemma
\ref{lem:main:p2} are all unions of at most $L$ intervals (when
considered cyclically). Moreover, there exist infinitely many $N$ for
which all these sets are unions of at most $L-1$ intervals (again,
when considered cyclically).
\end{lemma}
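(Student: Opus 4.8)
The plan is to analyze the folding through its \emph{multiplicity function}. Identifying $[0,\frac1N]$ with the circle $\R/\frac1N\Z$ and rescaling to $\R/\Z$, set $m(s)=\#\{j\in\{0,\dots,N-1\}:(s+j)/N\in X\}$; this is exactly the function whose super-level set at height $n$ is $X_{\ge n}$. If $X=\bigcup_{i=1}^{L}(\alpha_i,\beta_i)$, then interval $i$ contributes a $+1$ to $m$ precisely on an arc $A_i$ running from $\{N\alpha_i\}$ to $\{N\beta_i\}$, of length $\{N(\beta_i-\alpha_i)\}$, so $m=\mathrm{const}+\sum_{i=1}^{L}\1_{A_i}$. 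Thus $m$ is piecewise constant on the circle, jumping up by one at each folded left endpoint $\{N\alpha_i\}$ and down by one at each folded right endpoint $\{N\beta_i\}$ (larger jumps occur only when several of these coincide). For the first assertion I would count components by up-crossings: when $X_{\ge n}$ is neither empty nor the whole circle, its number of connected components equals the number of points where $m$ passes from a value $<n$ to a value $\ge n$, and every such up-crossing sits at one of the at most $L$ folded left endpoints. Distinct components use distinct left endpoints, so $X_{\ge n}$ is a union of at most $L$ cyclic intervals; the empty and full-circle cases contribute $0$ and $1$, so the bound holds for all $n$.

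For the second assertion the key observation is that a single level $n$ attains the maximal count $L$ \emph{only if} every folded left endpoint is an up-crossing of that same level, which forces $m$ to take only the two consecutive values $n-1,n$ and all $L$ up-jumps to be distinct. Hence it suffices to find infinitely many $N$ whose folded configuration avoids this, and I would do so by exhibiting one explicit ``good'' arrangement and realizing it infinitely often. The good arrangement: let $A_1,A_2$ overlap in a common sub-arc (equivalently one nested in the other), let $A_3,\dots,A_L$ be pairwise disjoint and disjoint from $A_1\cup A_2$, and let $\bigcup A_i$ not cover the circle. Then at the lowest active level the super-level set equals $(A_1\cup A_2)\cup A_3\cup\dots\cup A_L$, a union of $L-1$ arcs (the overlap fuses $A_1,A_2$ into one), while every higher level has even fewer; so each $X_{\ge n}$ is a union of at most $L-1$ cyclic intervals.

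It remains to realize this configuration for infinitely many $N$, and this is the step I expect to be the main obstacle, since it is genuinely number-theoretic. If the $2L$ endpoints are rationally independent together with $1$, then by Weyl's theorem the vector $(\{N\alpha_i\},\{N\beta_i\})_{i=1}^{L}$ equidistributes in $(\R/\Z)^{2L}$; the good arrangement is a nonempty open condition on the endpoint positions, hence is met for a positive density of $N$. If the endpoints satisfy rational relations, the orbit closure is, by Kronecker's theorem, a proper subtorus coset, and I would argue the dichotomy that either this subtorus still meets the open good region (again giving infinitely many $N$), or the relation is of a degenerate type forcing, for infinitely many $N$, either an empty arc $\{N(\beta_i-\alpha_i)\}=0$ or two coincident folded endpoints. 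In each degenerate case the number of \emph{distinct} up-jump locations is already at most $L-1$, so the bound holds outright.

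Verifying that this dichotomy is exhaustive over all dependence patterns among the $2L$ endpoints is the technical heart of the argument, and is where I would spend the most care: one must rule out the possibility that some subtorus lies entirely inside the ``bad'' (all up-jumps crossing one common level) region without already forcing a coincidence. Finally, the case $L=1$, where ``at most $L-1$ intervals'' is understood as the full circle, is degenerate and is handled by the same empty-arc mechanism when the single length is rational; for the applications the relevant regime is $L\ge 2$.
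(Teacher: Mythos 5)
First, a point of comparison: the paper does not prove this lemma at all --- it is imported verbatim from \cite{KozmaNitzan} (``a consequence of claim 3 and lemma 4 there''), so any complete argument here would be a genuine addition. Your treatment of the first assertion is correct and is essentially the standard one: $m=C+\sum_i\1_{A_i}$ can only jump up at the (at most $L$) folded left endpoints, each component of $\{m\ge n\}$ other than the full circle owns a distinct up-crossing, hence at most $L$ components. Your structural reduction of the second assertion is also correct: a level with exactly $L$ components forces all $2L$ folded endpoints to be distinct and the arcs $A_1,\dots,A_L$ to be nonempty, pairwise disjoint and non-covering, so it suffices to exhibit infinitely many $N$ avoiding exactly this configuration.

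The gap is that you never actually produce those $N$. The entire content of the ``moreover'' clause is the arithmetic existence statement, and your proposal replaces it with a programme (Weyl equidistribution when the endpoints are rationally independent, a Kronecker orbit-closure dichotomy otherwise) whose decisive step --- ``one must rule out the possibility that some subtorus lies entirely inside the bad region without already forcing a coincidence'' --- you explicitly leave unverified. As it stands this is a plan, not a proof; moreover, aiming for a specific open ``good region'' (two arcs overlapping, the rest disjoint) is harder than necessary, since you only need to land in the \emph{complement} of the bad set, which contains many easily forced degeneracies. A more economical route: if some length $\ell_i=\beta_i-\alpha_i$ is rational, then for infinitely many $N$ one has $\{N\ell_i\}=0$, the arc $A_i$ degenerates, there are at most $L-1$ up-jump locations and you are done outright; if all lengths are irrational, disjointness of the arcs forces $\sum_i\{N\ell_i\}\le 1$, so it suffices to make, say, $\{N\ell_1\}+\{N\ell_2\}>1$ for infinitely many $N$, and the residual case analysis (e.g.\ $\ell_1+\ell_2\in\Q$, where the sum of fractional parts is quantized) must then be checked against the endpoint-coincidence escape hatch. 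That analysis is short but it is precisely the part you have not done, so the second assertion remains unproved in your write-up. (Your observation that $L=1$ with irrational length is genuinely problematic, and that the applications only need $L\ge 2$, is correct and worth keeping.)
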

Here and below a ``cyclic interval'' is either an $[a,b]\subset
[0,1/N]$ for $a<b$ or a $[0,b]\cup[a,1/N]$ for $b<a$.
\section{Proof of theorem \ref{mainresult:p2}}

The proof follows by induction and, as is quite typical for inductive
proofs, we need to prove a stronger claim in order to make the
induction tick. We describe it in the following definition
\begin{definition}Let $X_1,X_2,\dotsc\subset[0,1]^d$. A \emph{coherent
    collection of Riesz bases} are $\Xi_i\subset \Z^d$ such that
  $e(\Xi_i)$ is a Riesz basis for $X_i$ and such that $X_i\subset X_j$
  implies that $\Xi_i\subset \Xi_j$.
\end{definition}
The ``stronger claim'' above is now
\begin{theorem}\label{generalmain:p2}
Any collection of sets, each of which is a union of rectangles with
edges parallel to the axes, has a coherent collection of Riesz bases.
\end{theorem}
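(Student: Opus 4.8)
The plan is to prove Theorem~\ref{generalmain:p2} by induction on the dimension $d$, and the reason the statement concerns a whole \emph{collection} rather than a single set is exactly to make this induction close: a single $d$-dimensional set will spawn a family of $(d-1)$-dimensional sets, for which I will need Riesz bases that respect inclusions. I would first reduce to a \emph{finite} collection $X_1,\dots,X_M\subset[0,1]^d$ (this is the case needed for Theorem~\ref{mainresult:p2}, and a finite $d$-dimensional collection only ever produces finite $(d-1)$-dimensional collections, so the induction stays within finite collections). The base case $d=0$ --- a point or the empty set --- is trivial.

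To exhibit the mechanism in its cleanest form, suppose first that all rectangle endpoints are rational, so that there is a uniform grid $\tfrac1N\Z^d$ of which every $X_i$ is a union of cells. Writing a point as $(t,s)$ with $t\in[0,1]$, $s\in[0,1]^{d-1}$, I fold the first coordinate at the grid scale $N$ as in \S\ref{sec:fold}. Folding at this scale collapses the first coordinate to a single cell, and a direct cell count gives that the $n$-th folding is the honest product $X_{\ge n}=[0,\tfrac1N]\times B_{\ge n}(X_i)$, where $B_{\ge n}(X_i)\subset[0,1]^{d-1}$ is the union of those columns $s$ over which $X_i$ occupies at least $n$ of the $N$ first-coordinate cells. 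These $B_{\ge n}(X_i)$ are again finite unions of $(d-1)$-dimensional rectangles and are nested, $B_{\ge 1}\supset B_{\ge 2}\supset\cdots$. The induction hypothesis, applied to the finite collection $\{B_{\ge n}(X_i)\}_{i,n}$, supplies a coherent family $\Psi_{\ge n}(X_i)\subset\Z^{d-1}$ of Riesz bases; tensoring each with the orthonormal basis $e(N\Z)$ of $L^2[0,\tfrac1N]$ --- the $L=1$ case of Lemma~\ref{simple case:p2} --- gives Riesz bases for the foldings, and Lemma~\ref{lem:main:p2} then assembles them into
\[
\Sigma(X_i)=\bigcup_{n=1}^{N}(N\Z+n)\times\Psi_{\ge n}(X_i),\qquad \Sigma(X_i)\subset\Z^d,
\]
a Riesz basis for $L^2(X_i)$.

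The step I expect to carry the weight is \emph{coherence}, and the construction is arranged precisely to furnish it. If $X_i\subset X_j$ then the column heights can only increase, so $B_{\ge n}(X_i)\subset B_{\ge n}(X_j)$ for every $n$; by the \emph{coherence} of the $(d-1)$-dimensional family (not merely its existence) this yields $\Psi_{\ge n}(X_i)\subset\Psi_{\ge n}(X_j)$ for every $n$, whence $\Sigma(X_i)\subset\Sigma(X_j)$ block by block. It is instructive to see why the naive alternative fails: one could instead reduce the first coordinate to a single interval over each column and assemble by Lemma~\ref{simple case:p2} with the columns \emph{ordered by the length} of their interval; but that order is not monotone under $X_i\subset X_j$ (a column longer in $X_i$ may be shorter in $X_j$), so the frequency blocks assigned to a fixed column would not nest. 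Driving everything by the monotone superlevel sets $B_{\ge n}$ is exactly what removes this obstruction.

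The genuine difficulty is that the endpoints may be \emph{irrational}, so there is no uniform grid to fold into and the foldings $X_{\ge n}$ are no longer products. Here I would invoke the one-dimensional theory on the first coordinate: Theorem~\ref{result for intervals:p1} provides integer-frequency Riesz bases for arbitrary unions of intervals, and Lemma~\ref{ichsa} lets me pick the folding scale $N$ so that every first-coordinate slice folds to strictly fewer cyclic intervals. This gives an inner induction, on the first-coordinate complexity, whose base case (a single interval over each column) feeds into the $(d-1)$-dimensional hypothesis as above. The main technical task --- and, I expect, the crux of the whole argument --- is to run this inner induction while maintaining the monotone-height bookkeeping of the rational model, so that both the product part and the fractional remainder produced by each fold are handled by coherent lower-dimensional data; granting that, the coherence computation of the previous paragraph goes through verbatim.
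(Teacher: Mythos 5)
Your rational-grid model is fine as far as it goes: when every $X_i$ is a union of cells of a common grid $\frac1N\Z^d$, folding the first coordinate at scale $N$ turns each $X_{i,\ge n}$ into an honest product $[0,\frac1N]\times B_{\ge n}(X_i)$, the superlevel sets $B_{\ge n}$ are monotone under inclusion, and Lemma~\ref{lem:main:p2} plus the $(d-1)$-dimensional hypothesis gives coherent bases exactly as you describe. But this is not where the difficulty of the theorem lies, and your proposal stops precisely at the real content. The general case cannot be reduced to a common grid (Riesz bases are not stable under perturbing the set), and your closing paragraph concedes that the case of arbitrary interval lengths is an unproven ``technical task.'' That task is the theorem. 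The paper's Step~1 is devoted to it: writing $X_i=\bigcup_j I_{i,j}\times Y_j$ with the $Y_j$ disjoint and the $I_{i,j}$ intervals of arbitrary length, one takes a coherent one-dimensional collection $\L_{i,j}$ for the lengths $|I_{i,j}|$, a coherent $(d-1)$-dimensional collection $\Psi_J$ for the unions $\bigcup_{j\in J}Y_j$, orders the columns of each $X_i$ by interval length via a permutation $\sigma$, and sets $\Xi_i=\bigcup_j\L_{i,\sigma(j)}\times\Psi_{\{\sigma(j),\sigma(j+1),\dotsc\}}$, which is a Riesz basis by Lemma~\ref{simple case:p2}. The general case then reduces to this one by an inner induction on the number of components of the first-coordinate slices, using Lemma~\ref{ichsa} and Lemma~\ref{lem:main:p2} --- roughly as you sketch, but with Step~1 rather than your product picture as the base.

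Moreover, you dismiss this length-ordering construction as failing, on the grounds that the order of the columns is not preserved when $X_i\subset X_{i'}$, so ``the frequency blocks assigned to a fixed column would not nest.'' That observation is correct but it is not an obstruction: coherence does not require the block of a column in $\Xi_i$ to sit inside the block of the \emph{same} column in $\Xi_{i'}$, only inside \emph{some} block of $\Xi_{i'}$. The paper's key combinatorial point, which is the idea missing from your proposal, is that for each $j$ one can take $k=\inf\{l\le j:\sigma'(l)\notin\{\sigma(1),\dotsc,\sigma(j-1)\}\}$; then $|I_{i',\sigma'(k)}|\ge|I_{i,\sigma(j)}|$ forces $\L_{i',\sigma'(k)}\supset\L_{i,\sigma(j)}$, while $\{\sigma'(k),\sigma'(k+1),\dotsc\}\supset\{\sigma(j),\sigma(j+1),\dotsc\}$ forces $\Psi_{\{\sigma'(k),\dotsc\}}\supset\Psi_{\{\sigma(j),\dotsc\}}$, giving the required block-into-block containment. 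Without this (or a substitute for it) your induction on dimension does not close.
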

The proof of the $d=1$ case will follow easily from the following lemma
\begin{lemma}\label{corr1:p2}
Let $X\subset [0,1]$ be a union of $L$ intervals and fix $N>0$. Assume
that $m/N\leq|X|<(m+1)/N$ where $m$ is a positive integer. Then there
exists a $\Xi$ with $e(\Xi)$ a Riesz basis in $L^2(X)$ such that
    \[
    \cup_{n=0}^{m-2L-1}(N\Z+n)\subseteq \L \subseteq \cup_{n=0}^{m+2L}(N\Z+n).
    \]
\end{lemma}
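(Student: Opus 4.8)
The plan is to fold $X$ by the given $N$ and then invoke Lemma \ref{lem:main:p2}. That is, I would form the $d=1$ sets $X_n$ and $X_{\ge n}$ from the construction preceding Lemma \ref{lem:main:p2}, construct for each $n$ a set $\Xi_n\subset N\Z$ with $e(\Xi_n)$ a Riesz basis of $L^2(X_{\ge n})$, and set $\Xi=\bigcup_n(\Xi_n+n)$; Lemma \ref{lem:main:p2} then immediately gives that $e(\Xi)$ is a Riesz basis of $L^2(X)$. All of the work is in choosing the $\Xi_n$ so as to control which residues modulo $N$ occur in $\Xi$.

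To produce the $\Xi_n$ I would use Theorem \ref{result for intervals:p1} as a black box on the folded pieces. The unitary rescaling $t\mapsto Nt$ identifies $L^2(X_{\ge n})$ with $L^2(N\cdot X_{\ge n})$ and sends $N\Z$-frequencies to $\Z$-frequencies; since $N\cdot X_{\ge n}\subset[0,1]$ is again a finite union of intervals (a cyclically wrapped piece unwraps into at most two ordinary intervals), Theorem \ref{result for intervals:p1} yields an integer Riesz basis $\Lambda_n$ for it, and $\Xi_n:=N\Lambda_n\subset N\Z$ is the desired Riesz basis of $L^2(X_{\ge n})$. For a \emph{full} piece, $X_{\ge n}=[0,1/N]$, I would instead take $\Xi_n=N\Z$, so that $\Xi_n+n=N\Z+n$; for an empty piece $\Xi_n=\emptyset$.

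The inclusion is then a counting statement about the small intervals $[j/N,(j+1)/N]$. Since $X$ is a union of $L$ intervals with $m/N\le|X|<(m+1)/N$, at least $m-2L$ of these small intervals lie inside $X$ and at most $m+2L$ of them meet $X$. A small interval contained in $X$ raises the covering multiplicity by one at every point, so at least $m-2L$ of the decreasing sets $X_{\ge n}$ equal $[0,1/N]$ and at most $m+2L$ of them are nonempty; both the full pieces and the nonempty pieces form initial blocks $n=1,\dots,M$ and $n=1,\dots,K$ with $M\ge m-2L$ and $K\le m+2L$. Thus $\Xi$ contains the residue classes $1,\dots,M$ in full and meets only the classes $1,\dots,K$; translating $\Xi$ by $-1$ (which preserves the Riesz-basis property) shifts these to $0$-based blocks and yields $\bigcup_{n=0}^{m-2L-1}(N\Z+n)\subseteq\Xi\subseteq\bigcup_{n=0}^{m+2L}(N\Z+n)$, as claimed.

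The only real work is the bookkeeping: pinning down the two interval counts with the stated constants, checking that the rescaling genuinely transports the Riesz-basis property between $L^2([0,1/N])$ with frequencies in $N\Z$ and $L^2([0,1])$ with frequencies in $\Z$, and matching the multiplicity index $n$ used by the folding lemma against the $0$-based residue range in the statement via the harmless translation above. None of these steps is deep, but the constants must be tracked with care, since it is precisely this density control that later allows the frequency sets for nested sets to be chosen nested as well.
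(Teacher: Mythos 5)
Your proposal is correct and follows essentially the same route as the paper: fold by the given $N$, take $\Xi_n=N\Z$ for the pieces $X_{\ge n}$ equal to $[0,1/N]$, $\Xi_n=\emptyset$ for the empty ones, use Theorem \ref{result for intervals:p1} (after rescaling) on the remaining finite unions of intervals, and conclude via Lemma \ref{lem:main:p2} together with the counting of small intervals contained in, respectively meeting, $X$. Your explicit treatment of the rescaling and of the shift matching the $1$-based folding index to the $0$-based residue range is a detail the paper leaves implicit, but it is the same argument.
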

\begin{proof}
Divide $[0,1]$ into the intervals $[n/N,(n+1)/N]$ and note that, since $m/N\leq|X|<(m+1)/N$ and $X$ is a union of $L$ intervals, at least $m-2L$ of the intervals
$[n/N,(n+1)/N]$ belong to $X$ and no more then $m+2L+1$ of them
intersect $X$. We wish to apply Lemma
\ref{lem:main:p2} with this $N$, so examine the sets $X_{\ge n}$ from
the statement of the lemma. We get that among the $X_{\ge n}$ at least
$m-2L$ are equal to $[0,1/N]$ (so the corresponding $\Xi_n$ can, and
must be taken to be $N\Z$) and no more then $m+2L$ are non-empty (for
which the $\Xi_n$ must be taken empty). The remaining sets are finite
unions of intervals so we may apply Theorem
\ref{result for intervals:p1} to find Riesz bases for them with
frequencies from $N\Z$. Applying Lemma \ref{lem:main:p2} the
resulting basis $\Xi$ has the necessary property.
\end{proof}

\begin{proof}[Proof of Theorem \ref{generalmain:p2}]
As promised, the case $d=1$ follows directly from Lemma
\ref{corr1:p2}. Indeed, let $X_i$ be the unions of rectangles
(intervals in our case) for which we need to find a coherent collection of
Riesz bases. Let $L$ be
the maximum number of intervals in any $X_i$ and
take $N>4L/\min|X_j\setminus X_i|$, where the minimum is taken over
all $i$ and $j$ such that $X_i\subset X_j$.
Construct Riesz bases $\Xi_i$ for $L^2(X_i)$ using
Lemma \ref{corr1:p2} with this $N$. We get that the $\Xi_i$ are
automatically coherent as $X_i\subset X_j$ implies that, for any $k$,
if $\Xi_i\cap (N\Z+k)\ne\emptyset$ then necessarily $N\Z+k\subset\Xi_j$. This
finishes the case $d=1$.

We now move to the case $d>1$.

\subsection*{Step 1.} First, we prove the induction step in the case where
the intersection of each $X_i$ with each line parallel to the first
coordinate axis is an interval.
\begin{claim*}In this case, it is possible to find disjoint sets
$Y_j\subset [0,1]^{d-1}$ and intervals $I_{i,j}\subset[0,1]$ (possibly
  empty) such that each $X_i$ can be written as
\[
X_i=\bigcup_j I_{i,j} \times Y_j.
\]
Further, all $Y_j$ can be taken to be finite unions of rectangles.
\end{claim*}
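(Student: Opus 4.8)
The plan is to prove the claim by an explicit construction of a common refinement. Each $X_i$ is a finite union of axis-parallel rectangles, and by hypothesis its intersection with every line parallel to the first coordinate axis is a (possibly empty) interval. The key observation is that such an $X_i$ is completely determined by two functions on $[0,1]^{d-1}$: for each $s\in[0,1]^{d-1}$, the fiber $\{t:(t,s)\in X_i\}$ is the interval $[\ell_i(s),r_i(s)]$ (taking it empty when the fiber is empty). Because $X_i$ is a finite union of axis-parallel rectangles, both $\ell_i$ and $r_i$ are piecewise constant on a partition of $[0,1]^{d-1}$ into finitely many rectangles. The whole construction reduces to building a single partition of $[0,1]^{d-1}$ that is simultaneously adapted to all the $\ell_i,r_i$.

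The key steps, in order, are as follows. First I would, for each $i$, extract the rectangular decomposition $X_i=\bigcup_k R_{i,k}\times Q_{i,k}$ witnessing that $X_i$ is a finite union of axis-parallel rectangles, where $R_{i,k}\subset[0,1]$ and $Q_{i,k}\subset[0,1]^{d-1}$ are rectangles. Collect all the base rectangles $Q_{i,k}$ over all $i$ and $k$. Second, I would take the common refinement of this finite collection: the coordinate hyperplanes bounding the $Q_{i,k}$ cut $[0,1]^{d-1}$ into finitely many open cells, and I would let $\{Y_j\}$ be these cells (suitably handling boundaries so that the $Y_j$ are pairwise disjoint and their union is $[0,1]^{d-1}$ up to a null set). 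By construction each $Y_j$ is a rectangle, hence certainly a finite union of rectangles, and the $Y_j$ are disjoint. Third, I would check the crucial refinement property: because the $Y_j$ refine every $Q_{i,k}$, the set $X_i\cap(\,[0,1]\times Y_j)$ has a constant fiber over $Y_j$, so it equals $I_{i,j}\times Y_j$ for a single interval $I_{i,j}$ (here we use that the fiber of $X_i$ is an interval, so the constant fiber is itself an interval rather than a more complicated union). Summing over $j$ then yields $X_i=\bigcup_j I_{i,j}\times Y_j$ exactly.

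The main obstacle I anticipate is bookkeeping at the boundaries rather than anything conceptual. The fibers are intervals by hypothesis, but to get a genuine equality $X_i=\bigcup_j I_{i,j}\times Y_j$ (not merely equality up to measure zero) one must be careful about whether the rectangles are open, closed, or half-open, and about the shared boundary faces between adjacent cells $Y_j$. I would address this by noting that for the purpose of $L^2$ and of supporting Riesz bases, modifications on sets of measure zero are irrelevant, so it suffices to produce the decomposition up to a null set; alternatively one fixes a convention (say half-open rectangles) so that the $Y_j$ genuinely partition the cube. The point that requires the interval hypothesis is exactly the third step: without it, the restriction of $X_i$ to $[0,1]\times Y_j$ would still be a union of products of the form $(\text{finite union of intervals})\times Y_j$, and one would not be able to write it with a single interval $I_{i,j}$. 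With the hypothesis in force, each such restriction is a single rectangle, which is precisely what the claim demands.
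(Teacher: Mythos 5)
Your construction is correct and is exactly the argument the paper has in mind: the paper omits the proof with the parenthetical ``take a total refinement of appropriate projections of parts of the $X_i$,'' which is precisely your common refinement of the base rectangles $Q_{i,k}$, with the interval-fiber hypothesis guaranteeing that each restriction $X_i\cap([0,1]\times Y_j)$ is a single product $I_{i,j}\times Y_j$. Your handling of boundary/measure-zero issues is also appropriate, since only the $L^2$ theory matters here.
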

The proof of this claim is simple (take a total refinement of
appropriate projections of parts of the $X_i$) and will be omitted.

Returning to the proof of Theorem \ref{generalmain:p2}, we first use the case $d=1$ already established to find a coherent
collection of Riesz bases $\L_{i,j}$ for the intervals $[0,|I_{i,j}|]$
i.e.\ for translations of $I_{i,j}$ so that their left side is at
0. Since the property of being a Riesz basis is translation invariant
we get that each $\L_{i,j}$ is a Riesz basis for $I_{i,j}$. In other
words, $\L_{i,j}$ is a collection of Riesz bases for $I_{i,j}$ with
the property that if $|I_{i,j}|\le|I_{i',j'}|$ then $\L_{i,j}\subseteq \L_{i',j'}$.

Next we apply the induction assumption for $d-1$ and get a coherent collection of
Riesz bases for all \emph{finite unions} of the $Y_j$. Denote, for each set
of indices $J$, $Y_J=\bigcup_{j\in J}Y_j$ and $\Psi_J\subset\Z^d$ the Riesz basis
over $Y_J$.

For each $i$ let $\sigma$ be the rearrangement of $I_{i,j}$ by
length, i.e.\ $\sigma$ is a permutation such that $|I_{i,\sigma(1)}|\le
|I_{i,\sigma(2)}|\le\dotsb$ and define
\[
\Xi_i=\bigcup_j \L_{i,\sigma(j)}\times \Psi_{\{\sigma(j),\sigma(j+1),\dotsc\}}.
\]
By Lemma \ref{simple case:p2} $\Xi_i$ is a Riesz basis for $X_i$. To
see coherency, let $i$ and $i'$ satisfy that $X_{i}\subset
X_{i'}$ and let $\sigma$ and $\sigma'$ be the corresponding
permutations. To shorten notations denote the different pieces of
$\Xi$ and $\Xi'$ by $A_j$ and $A_j'$ respectively i.e.
\[
A_j:=\L_{i,\sigma(j)}\times
\Psi_{\{\sigma(j),\sigma(j+1),\dotsc\}}
\qquad
A_j':=\L_{i',\sigma'(j)}\times
\Psi_{\{\sigma'(j),\sigma'(j+1),\dotsc\}}.
\]
We need to show that $\Xi_{i}\subset \Xi_{i'}$, and this will
follow once we show that for every $j$ there exists $k$ such that
$A_j\subset A_k'$. Fix therefore $j$ and examine the $j$ shortest
intervals for $X_{i'}$ i.e.\ $\sigma'(1),\dotsc,\sigma'(j)$. They
cannot be all in the set $\sigma(1),\dotsc,\sigma(j-1)$ so let $k$
be the first which is not in it i.e.
\[
k=\inf\big\{l\le j:
\sigma'(l)\not\in\{\sigma(1),\dotsc,\sigma(j-1)\}\big\}.
\]
The claim now follows easily. We first note that
\[
|I_{i',\sigma'(k)}|\stackrel{(*)}{\ge} |I_{i,\sigma'(k)}|
\stackrel{(**)}{\ge}|I_{i,\sigma(j)}|
\]
where $(*)$ is because $X_i\subset X_{i'}$ and $(**)$ is because
$\sigma'(k)$ is not in $\{\sigma(1),\dotsc,\sigma(j-1)\}$. Hence the coherency of the $\L$'s gives that
$\L_{i',\sigma'(k)}\supset\L_{i,\sigma(j)}$. On the other hand,
$\{\sigma'(1),\dotsc,\sigma'(k-1)\}\subset\{\sigma(1),\dotsc,\sigma(j-1)\}$
and taking complements gives
\[
\{\sigma'(k),\sigma'(k+1),\dotsc\}\supset\{\sigma(j),\sigma(j+1),\dotsc\}
\]
and the coherency of the $\Psi$'s gives that
$\Psi_{\{\sigma'(k),\sigma'(k+1),\dotsc\}}
\supset\Psi_{\{\sigma(j),\sigma(j+1),\dotsc\}}$. Together we get
$A_j\subset A_k'$, as required.

\subsection*{Step 2.} As in step 1, we find disjoint sets
$Y_j\subset[0,1]^{d-1}$ and $S_{i,j}\subset [0,1]$ (which are no longer
necessarily intervals, but are finite unions of intervals) such that
\[
X_i=\bigcup_jS_{i,j}\times Y_j.
\]
Let $M_{i,j}$ be the number of components of $S_{i,j}$. We argue by
induction on the vector $\{M_{i,j}\}$, with the case that all $M_{i,j}$
are either 0 or 1 given by step 1.

Let therefore $i_0$ and $j_0$ satisfy that $M_{i_0,j_0}\ge 2$. Recall the
notation $X_{\ge n}$ from \S\ref{sec:fold}, which was defined with
respect to some $N$ which does not appear in the notation. When we
apply it to sets which already have a subscript, like $S_{i,j}$, we will write
$S_{i,j,\ge n}$. By Lemma \ref{ichsa} we can find some $N$ such that the sets
$S_{i_0,j_0,\ge n}$ contain no more than $M_{i_0,j_0}-1$ intervals, for
all $n$. Since the operation $\cdot_{\ge n}$ examines only the first
coordinate, and because the $Y_j$ are disjoint, we have
\[
X_{i,\ge n}=\bigcup_j S_{i,j,\ge n}\times Y_j.
\]
Again by Lemma \ref{ichsa}, $S_{i,j,\ge n}$ has no more than $M_{i,j}$
components, for all $i$ and $j$. Therefore we may apply our induction
hypothesis to $X_{i,\ge n}$ (formally after stretching the first
coordinate by $N$) and get a coherent collection of Riesz
bases $\Xi_{i,n}$ in $N\Z\times\Z^{d-1}$. Define
\[
\Xi_i=\bigcup_{n=1}^N\Xi_{i,n}+(n,0,\dotsc,0).
\]
and get from Lemma \ref{lem:main:p2} that $e(\Xi_i)$ is a Riesz basis
for $L^2(X_i)$. Since $X_i\subset X_j$ implies that
$X_{i,\ge  n}\subset X_{j,\ge n}$, we get that the $\Xi_i$ are coherent,
finishing step 2 and the proof of the theorem.
\end{proof}

\section{Remarks on the proof}

The main ingredient in the proof of theorem \ref{result for
  intervals:p1} from \cite{KozmaNitzan} was the one-dimensional case
of lemma \ref{lem:main:p2}. Examining its proof it is natural to
wonder whether it could have been generalized directly to prove the
$d$-dimensional result by folding in all dimensions simultaneously. As
far as we can see, this is not possible. The proof of lemma
\ref{lem:main:p2} relies on the fact that for any choice of $n$
columns in the $N\times N$ Fourier matrix the first $n$ rows will
give, universally, an $n\times n$ invertible matrix, as it is a
Vandermonde matrix. This is not the case for the analog of the Fourier
matrix in higher dimensions, no such ``universal'' choice of rows
exists, as can be checked directly for the $4\times 4$ matrix of the
Fourier transform the group $(\Z/2)^2$. 

\subsection*{Acknowledgements} We thank A.\ Olevski\u\i{} for showing us
how to deduce the Riesz sequence part from the frame part in the proof
of Lemma \ref{lem:main:p2}. Work partly supported by the
Israel Science Foundation and the Jesselson Foundation.

\begin{flushright}
\footnotesize
Gady Kozma\\
\nolinkurl{gady.kozma@weizmann.ac.il}\\
The Weizmann Institute of Science, Rehovot, Israel

\medskip

Shahaf Nitzan\\
\nolinkurl{shahaf.n.h@gmail.com}\\
Kent State University, Kent, OH, USA

\end{flushright}

\end{document}